\documentclass{amsart}
\usepackage{amssymb}
\usepackage{color}
\usepackage{float}
\usepackage[all,cmtip]{xy}
\usepackage{graphicx}
\usepackage{hyperref}
\usepackage{cancel}
\usepackage{pstricks}
\usepackage{mathabx}
\usepackage{tikz-cd}

\makeatletter
\newsavebox{\@brx}
\newcommand{\llangle}[1][]{\savebox{\@brx}{\(\m@th{#1\langle}\)}%
  \mathopen{\copy\@brx\kern-0.5\wd\@brx\usebox{\@brx}}}
\newcommand{\rrangle}[1][]{\savebox{\@brx}{\(\m@th{#1\rangle}\)}%
  \mathclose{\copy\@brx\kern-0.5\wd\@brx\usebox{\@brx}}}
\makeatother

\newcommand{\Acal}{{\mathcal{A}}}
\newcommand{\Bcal}{{\mathcal{B}}}

\newcommand{\Fcal}{{\mathcal{F}}}

\newcommand{\Hcal}{{\mathcal{H}}}
\newcommand{\Ical}{{\mathcal{I}}}

\newcommand{\Lcal}{{\mathcal{L}}}

\newcommand{\Ncal}{{\mathcal{N}}}

\newcommand{\Rcal}{{\mathcal{R}}}

\newcommand{\Ucal}{{\mathcal{U}}}
\newcommand{\Vcal}{{\mathcal{V}}}
\newcommand{\Wcal}{{\mathcal{W}}}

\newcommand{\DD}{\mathbb{D}}

\newcommand{\NN}{\mathbb{N}}

\newcommand{\RR}{\mathbb{R}}

\newcommand{\ZZ}{\mathbb{Z}}

\newcommand{\Cont}{\operatorname{Cont}}

\newcommand{\Diff}{\operatorname{Diff}}

\newcommand{\Ham}{\operatorname{Ham}}

\newcommand{\codim}{\operatorname{codim}}

\newcommand{\sub}{\operatorname{Sub}}
\newcommand{\Sub}{\operatorname{Sub}}
\newcommand{\Op}{\operatorname{Op}}

\newtheorem{theorem}{Theorem}
\newtheorem*{theorem*}{Theorem}
\newtheorem{proposition}{Proposition}[section]
\newtheorem{lemma}[proposition]{Lemma}

\newtheorem{corollary}[theorem]{Corollary}

\theoremstyle{definition}
\newtheorem{definition}[proposition]{Definition}
\newtheorem{example}[proposition]{Example}

\theoremstyle{remark}
\newtheorem{remark}[proposition]{Remark}
\newtheorem{question}[proposition]{Question}

\numberwithin{equation}{section}

\newcommand{\bparagraph}[1]{\vspace{1em}\noindent\textbf{#1}.\hspace{0.2cm}}

\begin{document}

\title[]{On the intersections of projected Hamiltonian orbits in cotangent bundles}

\author{Lucas Dahinden, Jacobus de Pooter}

\subjclass[2020]{Primary 37J06; Secondary 58A20, 53C22}
\keywords{geodesic without self-intersections, perturbation of Hamiltonians, multi-jet transversality, jet spaces, transversality, generic Hamiltonians.}

\date{\today}

\begin{abstract}
We study the generic behavior of Hamiltonian trajectories on a regular level set in the cotangent bundle, after projection to the base. We prove that for a generic submersive level set, projected trajectories have discrete (self-)intersections. Additionally, fixing end-point fibers, we prove that all intersections can be perturbed away if the base has dimension $\geq 3$. In particular, this applies to periodic orbits, and both results hold for Reeb flows on fiber-wise star-shaped hypersurfaces, including non-reversible Finsler flows, which answers a question of Rademacher. In the proof we make use of a multi-jet transversality theorem.
\end{abstract}

\maketitle

\tableofcontents

\section{Introduction}\label{sec:intro}

\subsection{Context and Background}
On a closed Riemannian manifold, there always exist closed geodesics. In the non-simply connected case this follows by length minimization in a non-trivial homotopy class, and in the simply connected case this is a classic result by Ljusternik and Fet~\cite{LF}. Given two points, there exists also a geodesic connecting them (a chord), which is proved by similar techniques. A geodesic is called simple if it admits an injective parametrization. The length-minimal non-contractible geodesic in the non-simply connected case is always simple, as a crossing may be used to split the curve into two shorter ones, at least one of which is non-contractible. In the simply connected case, the existence of a simple closed geodesic is an open problem~\cite{AHS}. It is a classic consequence of Morse theory that the loop space on a manifold (the space of closed paths or the space of paths from a point to the other) is homotopy equivalent to a CW-complex, which can be constructed from closed geodesics and chords, cf.~\cite{M}, which guarantees a wealth of closed geodesics chords. As is pointed out in~\cite[Problem C]{T}, these geodesics may be not geometrically distinct. The hunt for geometrically distinct geodesics is thus a challenge that goes beyond the simple homological construction, see e.g.~\cite{Franks, CM24, AM} for compact and non-compact settings in which arbitrarily many geometrically distinct geodesics exist. In contrast, a complementary example can be found in non-reversible Finsler geometry, where the Bangert sphere only has two geometrically distinct geodesics~\cite{B}.

Recent theorems by Rademacher~\cite{R23,R24} treat the problem for generic metrics: He proves that for generic metrics periodic geodesics and chords do not intersect (each other and themselves). We loosely and collectively state the theorems as follows.
\begin{theorem*}[Rademacher]
    Let $M$ be a closed manifold of dimension $\geq 3$ and let $p,q\in M$ be two points, not necessarily distinct. For a $C^r$-generic set of Riemannian metrics ($r\geq 2$), geodesic segments connecting $p$ to $q$ have no self-intersection, and geometrically distinct segments have no interior intersections. 

    Moreover, for a $C^r$-generic set of reversible Finsler metrics ($r\geq 4$), the same holds true.
\end{theorem*}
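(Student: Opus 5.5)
Here is how I would prove this via a parametric (Baire) transversality argument, applied to perturbations of the metric, viewed as the fibrewise quadratic Hamiltonian $H_g(x,\xi)=\tfrac12|\xi|^2_{g^{-1}}$ on $T^*M$, restricted to the unit cosphere bundle $S^*M$. Reversible Finsler metrics are handled in the same way with $H=\tfrac12F^{*2}$.

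\emph{Reduction to a countable family.} Fix a length bound $L$. The chords from $p$ to $q$ of length $\le L$ are parametrized by initial covectors in $S^*_pM$ whose flow at time $t\le L$ lands in the fibre $T^*_qM$. Consider the map
\[
\Phi_g\colon S^*_pM\times[0,L]\to M,\qquad (\xi,t)\mapsto \pi\varphi^t_{g}(\xi).
\]
First I would invoke the bumpy-type result (Anosov/Klingenberg–Takens, adapted to chords): for generic $g$, $p$ and $q$ are non-conjugate along all chords. Then the chords of length $\le L$ are finitely many and nondegenerate, and they persist under small perturbations. The full generic set is then the countable intersection over $L\in\NN$ of open dense sets $\Gcal_L$, namely the metrics for which chords of length $\le L$ have no self- or mutual interior intersections.

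\emph{Perturbation near an intersection.} Openness of $\Gcal_L$ follows from finiteness and nondegeneracy, because intersections are a closed condition on a compact set once the endpoints are excluded. Near the endpoints the chords leave $p$ in distinct directions, so there are no intersections near $p$ or $q$. The one caveat is that when $p=q$, a chord and its reverse are the same geometric chord; this is exactly why the statement says ``geometrically distinct''. For density, take $g$ with finitely many chords $\gamma_1,\dots,\gamma_k$ and an interior intersection point $x=\gamma_i(s)=\gamma_j(s')$. Consider the parametric map
\[
(h,s,s')\mapsto \bigl(\gamma_i^{g+h}(s),\gamma_j^{g+h}(s')\bigr)\in M\times M,
\]
where $h$ ranges over perturbations supported in a small ball around a point of $\gamma_i$ at which $\gamma_i$ passes only once and away from all other chords. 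Since the chords are nondegenerate, $\gamma_i^{g+h}$ depends smoothly on $h$. I would show this map is a submersion onto a neighbourhood of the diagonal $\Delta$. The Jacobi-field variation from a localized perturbation $h$ moves $\gamma_i$ near $x$ in every normal direction, by solving the inhomogeneous Jacobi equation with the boundary conditions $J(0)=J(\ell)=0$, which is possible by non-conjugacy. The parameters $s,s'$ supply the tangential directions. By Sard–Smale, for generic $h$ the map $(s,s')\mapsto(\gamma_i(s),\gamma_j(s'))$ is transverse to $\Delta$. Since $2<\dim\Delta=n$ codimension-wise, i.e.\ the domain has dimension $2<n$ (here we use $n\ge3$), transversality means the map misses $\Delta$. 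Iterating over the finitely many pairs $(i,j)$, including $i=j$, and intersecting finitely many open dense conditions gives density.

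\emph{Main obstacle.} I expect the main difficulty to be showing that the normal variations realised by metric perturbations span the whole normal space at the intersection point. There are two problems. First, the perturbation must be supported away from the other chord, and also away from the other branch in the self-intersection case. Second, the variation must be controlled globally, because of the boundary constraint at $q$. I would first try to place the support on a short segment of $\gamma_i$ just before $x$ that is disjoint from $\gamma_j$. There I would compute, as in the Klingenberg–Takens perturbation lemma, that $h$ of the form $\phi(t)\,\eta\otimes\dot\gamma$ yields an arbitrary jump in $(J,J')$. Non-conjugacy then lets me correct the endpoint condition while keeping $J(s)$ arbitrary. For Finsler metrics I would repeat this with $H$ perturbed by fibrewise-homogeneous terms. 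The regularity $r\ge4$ enters through the smoothness of the inverse-Legendre transform.
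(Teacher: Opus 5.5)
\textbf{Comparison with the paper's route.} Your route is genuinely different from the one the paper describes: Rademacher's three steps, which the paper mimics in its proof of Theorem~\ref{thm:Main1}. There, after the nondegeneracy step, one uses that tangent geodesics coincide. This makes interior intersections transverse, hence isolated, hence finitely many. They are then removed one at a time by a perturbation localized near each intersection point. In the paper this is done by conjugating $H$ with cotangent lifts of diffeomorphisms $\Phi_r$ of $Q$ that push one branch onto a nearby parallel curve avoiding the others; choosing that curve needs a positive-dimensional sphere of normal directions, i.e.\ $n\ge 3$.

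You skip the isolation step. Instead you apply parametric transversality to the two-point evaluation map on the $2$-dimensional space of pairs $(s,s')$, so that $2<n$ forces the image to miss $\Delta$. In spirit this is the order-zero version of the dimension count in Theorem~\ref{thm:homopodaltransversality}. Order zero suffices because you restrict to finitely many chords rather than all flow lines on $\Sigma\times\Sigma$. What it buys you is independence from the ``tangent implies coincide'' step: you only need that distinct geodesics share no arc, in order to place the support. The price is a real analysis of the linearized boundary value problem, and that is where your sketch is wrong as written.

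\textbf{The gap: controlling $J(s)$.} Non-conjugacy of $p$ and $q$ along $\gamma_i$ only makes $h\mapsto J$ well defined, via unique solvability of $J''+R(J,\dot\gamma_i)\dot\gamma_i=F_h$ with $J(0)=J(\ell)=0$. It does not give you arbitrary $J(s)$ from a support just before $x$.

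If $F_h$ is supported in $[a,b]$ with $b<s$, then on $[b,\ell]$ the field $J$ is a Jacobi field vanishing at $\ell$. So $J(s)$ ranges only over $\{Y(s)\mid Y(\ell)=0\}$. This is a proper subspace of the normal space exactly when $\gamma_i(s)$ is conjugate to $q$ along $\gamma_i$, and nothing in your first step excludes that. The same happens for a support after $s$ when $\gamma_i(s)$ is conjugate to $p$.

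Two further points make this worse:
\begin{itemize}
\item You never proved the intersection set is finite. So transversality is needed at every pair with $\gamma_i(s)=\gamma_j(s')$, not only near $x$.
\item For $i=j$, a support lying on one side of both $s$ and $s'$ moves both branches by the same Jacobi field. Only $Y(s)-Y(s')$ counts, and that can again be deficient.
\end{itemize}

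\textbf{The repair: straddle the intersection.} Suppose $F_h$ can be any normal field supported in $[a,b]\cup[c,d]$ with $b<s<c$. Take $Z$ an arbitrary Jacobi field and $\chi$ a cutoff equal to $1$ on $[b,c]$ and $0$ outside $(a,d)$. Then $J=\chi Z$ is realized, so $J(s)=Z(s)$ is arbitrary.

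For $i\ne j$, two short segments of $\gamma_i$ near its ends, chosen off $\gamma_j$, handle all interior intersection times at once. For $s<a$ you get $J(s)=Y(s)$ with $Y(0)=0$ arbitrary. This fills the normal space once $a$ is below the first conjugate time of $p$, and symmetrically near $q$.

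For $i=j$, you need enough such segments that every relevant pair $s<s'$ is separated by one. Such pairs satisfy $s'-s\ge\delta$ by local injectivity. Separation makes $J(s)$ and $J(s')$ independent.

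You should also say how you treat the overlapping cases, where no point of $\gamma_i$ is passed once and off the other chord:
\begin{itemize}
\item a chord passing through $q$ at an interior time;
\item a multiply traversed loop when $p=q$.
\end{itemize}
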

As a corollary, Rademacher shows \cite[Corollary 1]{R24} that generically there are infinitely many geometrically distinct geodesic chords connecting the given end-points.

Rademachers proof of the above theorem relies at some point on a simple observation, which we state here loosely as follows. 
\begin{center}
    If two regular curves intersect at accumulating times, \\then they are tangent. 
\end{center}
In Lemma~\ref{lem:transverseQ}, we show that we can even conclude tangency to arbitrary order. If these two curves are Riemannian geodesics, then the existence of an injectivity radius lets us conclude that they coincide. 

Rademachers proof has essentially three steps:
\begin{enumerate}
    \item The geodesics of interest are non-degenerate for a generic metric. In particular, for a length $L$ the set of geodesics of interest of length $\leq L$ is finite. 
    \item By the simple observation, tangent geodesics coincide. Thus, all intersections are transverse, thus isolated and there are only finitely many.
    \item Perturb the metric to resolve the finitely many intersections of the finitely many geodesics. Do repeat for every integer $L$ to get a genericity statement. 
\end{enumerate}
In Rademachers proof, the difficult step is the perturbation (3), which involves an implicit function theorem. 

The theorems still work well if the situation is generalized to reversible Finsler dynamics. However, for non-reversible Finsler dynamics there are tangent geodesics that do not coincide. This led Rademacher to ask:
\begin{question}[Rademacher]
    Does this theorem generalize to non-reversible Finsler metrics?
\end{question}

\subsection{Generalization}
We aim here to generalize Rademachers theorem beyond the class of Finsler metrics to a broader range of Hamiltonian dynamical systems. Note that non-intersecting geodesics are in particular geometrically distinct. For a Hamiltonian $H:T^*Q\to \RR$, we focus on the energy level set $\Sigma=\Sigma_H= H^{-1}(0)$. 
\begin{definition} 
    We denote by $\Ham_s$ the set of Hamiltonians $H\in C^\infty(T^*Q,\RR)$ such that the following properties hold.
    \begin{itemize}
        \item The zero level set is regular, i.e., $dH|_{\Sigma_H} \neq 0$.
        \item  The projection 
    $$\pi_Q:\Sigma=H^{-1}(0)\to Q$$
    is a submersion.
    \end{itemize}
\end{definition}
These conditions have important consequences for $\Sigma$. Regularity of the level set implies that $\Sigma$ is a manifold. The submersivity implies that the projection to the base manifold of the Hamiltonian vector field does not vanish $d\pi_Q X_H|_\Sigma \neq 0$ (see Lemma \ref{lem:submersiveHam}).
\begin{remark}
    It would also be convenient to demand that $Q$ is compact and $H$ is proper, which is in many applications the case. Then, $\Sigma$ would be a compact set, and together with submersivity we could conclude that it is a fiber bundle with pairwise diffeomorphic fibers. The main benefit of compactness of $\Sigma$ would be automatic completeness of the Hamiltonian flow. Our main result is local in nature, so that the comfort of completeness can be approximated by an exhaustion by compact sets. 
\end{remark}

In the following statements, we equip $C^\infty(T^*Q,\RR)$ with either the weak or strong Whitney $C^k$-topologies. The results hold for either. See for example \cite[Chapter 2]{GG73} for their definitions and properties. In the results below, $C^k$ residual means that the set of Hamiltonians for which the statement is true, is a countable intersection of $C^k$-open and $C^\infty$ dense set. Since the set of Hamiltonians is a Baire space, this intersection is still $C^\infty$ dense. We prove the following generalization of Rademachers theorem. 


\begin{theorem}\label{thm:Main1}
    Let $Q$ be a manifold of dimension $\geq 3$ and fix two points $q_1, q_2$. There is a set of Hamiltonians $H\in\Ham_s$ that is $C^2$-residual in the weak and strong Whitney topology such that the following statements are true:
    \begin{itemize}
        \item For any two distinct flow lines $\gamma_i:I_i\to\Sigma$ of the Hamiltonian vector field $X_H$ that connect the fibers $\Sigma_{q_1}$ and $\Sigma_{q_2}$ the projections to the base manifold $\pi_Q\circ \gamma_i$ intersect exclusively in the end points $\pi_Q\circ\gamma_1\cap \pi_Q\circ\gamma_2=\{q_1,q_2\}$. 
        \item No flow line $\gamma:I\to\Sigma$ of the Hamiltonian vector field $X_H$ that connects the fibers $\Sigma_{q_1}$ and $\Sigma_{q_2}$ admits a self-intersection: $\pi_Q\circ \gamma$ is injective (except at the end points if they coincide).
        \item For any pair of geometrically distinct periodic flow lines $\gamma_i:\RR/T_i\ZZ\to\Sigma$ of the Hamiltonian vector field $X_H$ the projections to the base manifold $\pi_Q\circ\gamma_i$ do not intersect.
        \item For any periodic flow line $\gamma:\RR/T\ZZ\to\Sigma$ of the Hamiltonian vector field $X_H$ that is not a multiple cover of the projection to the base manifold $\pi_Q\circ\gamma_i$ does not self-intersect: $\pi_Q\circ \gamma$ is injective.
    \end{itemize} 
\end{theorem}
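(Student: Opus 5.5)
The plan is a multi-jet transversality argument, the approach the abstract advertises, rather than Rademacher's non-degeneracy plus implicit-function perturbation. I first reduce to a countable family of local statements. Exhaust $T^*Q$ by compact sets $K_n$ and restrict to flow segments of time-length at most $n$ inside $K_n$. For each $n$, the set of $H\in\Ham_s$ such that all such segments satisfy the four properties (with the $1/n$-separation conditions made explicit) should be open in the $C^2$ topology. Openness follows by compactness once intersections have been shown to be transverse and to avoid the endpoints. Residuality then reduces to proving that each of these sets is $C^\infty$-dense, and the Baire property gives the conclusion.

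For density I would set up a multi-jet evaluation map. Consider configurations $(x,y,s,t)$ with $x,y\in\Sigma_H$ and times $s,t$, for example $x\in\Sigma_{q_1}$, $y\in\Sigma_{q_1}$ or $\Sigma_{q_2}$, and intersection times $s,t$. The condition ``$\pi_Q\phi^s_H(x)=\pi_Q\phi^t_H(y)$'' together with the boundary conditions ``the segments start in $\Sigma_{q_1}$ and end in $\Sigma_{q_2}$'' defines a subset of the configuration space. The Hamiltonian flow depends on the $1$-jet of $H$ along the orbit, so the right framework is a parametric transversality theorem in the family $H+\sum \epsilon_i h_i$. The perturbations $h_i$ are supported in small disjoint neighbourhoods of points on the two segments, away from the intersection point. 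I would then count dimensions. The free parameters are the starting points in the two end fibers, each of dimension $\dim\Sigma_q=n-1$, together with the flow times. The conditions are: both endpoints land in $\Sigma_{q_2}$, which is $n$ equations each, and the projections meet, which is $n$ equations. Each segment alone is cut out with the expected dimension $0$ (modulo energy/time bookkeeping). The additional intersection condition has codimension $n$ against only the $2$ extra time parameters. This gives expected dimension $2-n<0$ when $n\geq 3$, so generically the set is empty. The periodic and self-intersection cases work the same way: there the parameter is a point of $\Sigma$ (dimension $2n-2$, but modulo the flow) and the period, with $2n-1$ closing conditions. For self-intersections of a single orbit I use two distinct times $s\neq t$, again obtaining codimension $n$ against $2$ parameters.

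The key technical step is surjectivity of the linearized map with respect to the perturbations $h$. A local perturbation of $H$ near a point of a flow line can kick the trajectory in any direction tangent to $\Sigma$ transverse to $X_H$, and submersivity of $\pi_Q|_\Sigma$ (Lemma~\ref{lem:submersiveHam}) ensures these kicks move the projected curve in all base directions. For two distinct orbits I perturb near a point of one orbit not lying on the other, so the two orbits can be moved independently. For a self-intersection at times $s<t$ I perturb on the segment strictly between $s$ and $t$. This moves $\pi_Q\phi^t$ while leaving $\pi_Q\phi^s$ fixed.

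I expect the main obstacle to be the degenerate configurations where such independent perturbation points fail to exist. This happens when the two projected curves coincide on an open set, or when intersections accumulate, without the orbits being geometrically equal (the non-reversible Finsler phenomenon). Here I would invoke Lemma~\ref{lem:transverseQ}: accumulating intersections force tangency of the projections to all orders. Lifting via submersivity, I would then argue that tangency to infinite order is itself a condition of infinite codimension in the multi-jet space. Imposing agreement of $k$-jets of the projected curves at a point gives codimension growing like $k(n-1)$, so it can be avoided for a residual set by multi-jet transversality applied to $H$ at two distinct points of $\Sigma$. After excluding this, intersections are isolated and the dimension count above applies. Multiple covers of periodic orbits are excluded by definition in the last item.
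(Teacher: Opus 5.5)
Your route is genuinely different from the paper's, and it is sound in outline.

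\textbf{The paper's route.} The paper follows Rademacher's three steps.
\begin{enumerate}
\item It restricts to the $C^2$-open dense set where all chords and periodic orbits of bounded length in a compact $K$ are non-degenerate. This is what makes the final set open, because these orbits then move continuously with $H$.
\item It invokes Theorem~\ref{thm:discreteIntersections}, which is multi-jet transversality for jets of the projected flow lines, driven by the submersivity in Theorem~\ref{thm:R_G}. This guarantees that the finitely many relevant orbits meet in finitely many isolated points.
\item It removes these points one at a time by an explicit push-off. A family of diffeomorphisms of $Q$ is lifted to $T^*Q$ and cut off near one orbit. Here $\dim Q\geq 3$ enters through the choice of a normal direction $\theta$ that avoids the other curves.
\end{enumerate}

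\textbf{Your route.} You replace steps 2 and 3 by a single parametric transversality argument on orbits with marked intersection times, of index $2-n$. This makes the role of $\dim Q\geq 3$ transparent.

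\textbf{What your route buys.} Discreteness is never needed. Your $h_i$ are supported in $T^*Q$. Two geometrically distinct flow lines, or one simple flow line at two distinct times, are disjoint in $\Sigma$. So the independent kicks you describe exist at every intersection configuration, however the projections behave in $Q$. The kicks reach $T\Sigma/\RR X_H$, and $\pi_Q|_\Sigma$ is a submersion by definition of $\Ham_s$; the direction $X_H$ is supplied by varying $s,t$. Once the endpoint intersections are known to be non-tangent, the intersection configurations form a compact set. Finitely many $h_i$ then suffice, even if intersections accumulate or the projections overlap along arcs.

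Your ``main obstacle'' is therefore not an obstacle. Your last paragraph can be dropped: it is essentially Theorem~\ref{thm:discreteIntersections}, and it would require the submersivity of Theorem~\ref{thm:R_G}, which you assert without proof. In the paper, discreteness is needed only because its step 3 resolves intersections one by one.

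\textbf{What the paper's route buys.} It yields Theorem~\ref{thm:discreteIntersections} itself. That result concerns all flow lines, also holds for $\dim Q=2$, and does not follow from any dimension count on a moduli space of chords.

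\textbf{What you must make explicit.} Your $C^2$-open sets must be defined by three conditions:
\begin{enumerate}
\item non-degeneracy of all chords and periodic orbits (including iterates) of length $\leq n$ in $K_n$;
\item disjointness of the projections;
\item non-tangency of distinct chords at the shared endpoints $q_1,q_2$.
\end{enumerate}
``No intersections'' alone is not $C^2$-open. A degenerate chord can bifurcate under a $C^2$-small perturbation into nearby chords whose projections cross. Your transversality argument does produce these conditions, but your openness paragraph has to state them; this is the paper's step 1.
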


The main difficulty in proving this theorem is step (2): it is easy to construct examples of flow lines with accumulating intersection points. Thus, we aim for something weaker than an injectivity radius, and we expect it to hold only generically.

\begin{definition}
    We say that $H\in\Ham_s$ has \emph{discrete intersections} at the 0-energy level $\Sigma_H$ if for any two Hamiltonian flow lines $\gamma_1:I_1\to\Sigma_H,\gamma_2:I_2\to \Sigma_H$ that do not coincide up to time shift the set of intersection times $$\{(t_1,t_2)\in I_1\times I_2 \mid \pi_Q(\gamma_1(t_1))=\pi_Q(\gamma_2(t_2))\}$$
    is discrete.
\end{definition}

Note that discreteness of intersections does not propose a lower bound on the distance between intersections, and the time of first intersection is also not continuous. This is illustrated by the following example.
\begin{example}
    The $s-$family of lines $\gamma_s(t)=(t, st-t^2)$ in $\RR^2$ intersects the $x$-axis for each $s>0$ twice: $\gamma_s\cap\{y=0\}=\{(0,0),(s,0)\}$. The distance between the intersections is not bounded away from zero. It is easy to construct a Hamiltonian in $T^*\RR^2$ such that the lines are projected flow lines.
\end{example}

The essential step in the proof of Theorem~\ref{thm:Main1} is to show that generic Hamiltonians have discrete intersections.

\begin{theorem}\label{thm:discreteIntersections}
    Let $Q$ be a manifold of dimension at least 3. Then, a  subset of $\Ham_s$ that is $C^7$-residual in the weak and strong topology Whitney topology has discrete intersections at $\Sigma$. 
    If the dimension of $Q$ is two, then a  subset of $\Ham_s$ that is $C^6$-residual in the weak and strong topology Whitney topology has discrete intersections at $\Sigma$. 
\end{theorem}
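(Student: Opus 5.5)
Our plan is a multi-jet transversality argument applied to the $2$-fold multi-jet of $H$ at pairs of points in $\Sigma$ lying over the same base point. If two flow lines have non-discrete intersection times, there is an accumulation point $(t_1^*,t_2^*)$ with $\pi_Q(\gamma_1(t_1^*))=\pi_Q(\gamma_2(t_2^*))=q$. By Lemma~\ref{lem:transverseQ} the projected curves $c_i=\pi_Q\circ\gamma_i$ are then tangent at $q$ to arbitrarily high order after reparametrization: $c_1$ and $c_2\circ\phi$ agree to order $k$ for every $k$, with $\phi$ a local diffeomorphism. Two cases arise. Either $\gamma_1(t_1^*)=\gamma_2(t_2^*)$, and the flow lines coincide up to time shift, which is excluded. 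Or the points $x_1=\gamma_1(t_1^*)\neq x_2=\gamma_2(t_2^*)$ lie in the same fiber $\Sigma_q$ and the projected curves through them have high-order contact at $q$. So it suffices to show that generically no pair $x_1\neq x_2$ in a common fiber of $\Sigma$ has projected trajectories with contact of order $k$, for suitable $k$ depending on $\dim Q=n$.

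We then express contact to order $k$ as a condition on the $(k+1)$-jet of $H$ at $x_1$ and $x_2$. The derivatives $c_i^{(j)}(0)$ for $j\le k$ are polynomial in the $(j)$-jet of $H$ at $x_i$. Indeed $\dot c=d\pi_Q X_H=\partial_p H$, and the higher derivatives are obtained by differentiating along $X_H$. Contact of order $k$ up to reparametrization means that the $k$-jets of the unparametrized curves agree. This is the condition that the $k$-jets of $c_1,c_2$ coincide modulo the action of the reparametrization jet group, which is $(n-1)$ conditions per order $1,\dots,k$ (using $\dot c_i\neq0$, from Lemma~\ref{lem:submersiveHam}). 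Inside the multi-jet bundle $J^{k+1}_2(T^*Q,\RR)$ over pairs $(x_1,x_2)$, we define the bad set $B_k$. It is cut out by $H(x_1)=H(x_2)=0$, $\pi_Q(x_1)=\pi_Q(x_2)$ ($n$ conditions), and the $k(n-1)$ contact conditions. We check that $B_k$ is a finite union of submanifolds of the stated codimension. The key point is that the top-order term $c_i^{(j)}$ depends affinely and submersively on the $(j)$-th derivatives of $H$ at $x_i$. For example, $c^{(j)}$ contains $\partial_p\partial_x^{j-1}H\cdot(\partial_pH)^{j-1}$ plus lower-order terms. These derivatives can be varied independently at $x_1$ and $x_2$ because the points are distinct. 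Hence the conditions are independent, and $B_k$ is a submanifold (or stratified set) with codimension $2+n+k(n-1)$ in the multi-jet bundle over the $4n$-dimensional space of pairs.

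The multi-jet transversality theorem (as in \cite{GG73}) then says that for residual $H$ the multi-jet extension $j^{k+1}_2H$ is transverse to $B_k$. When the codimension exceeds $4n$, this forces it to miss $B_k$. We need $2+n+k(n-1)>4n$, i.e.\ $k(n-1)>3n-2$. For $n\ge3$ this gives $k=5$, and for $n=2$ it gives $k\ge5$ as well. Matching the $C^7$ and $C^6$ orders in the statement presumably needs a finer count: for instance, the energy condition restricts to $\Sigma$ so the base space is $\Sigma\times_Q\Sigma$, of dimension $3n-2$, and $n=2$ may admit a sharper count. We will fix $k$ by this count, and residuality in the $C^{k+2}$ topology follows since openness and density pass through the standard argument, with countably many compact pieces to handle the strong topology.

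The main obstacle is the independence claim in the second step. Contact modulo reparametrization is not a linear condition, and the reparametrization $\phi$ is itself determined by the jets. So we must choose coordinates in which, at each order $j$, the $(n-1)$ components of $c^{(j)}$ transverse to $\dot c$ are shown to be submersive in the $j$-jet of $H$ at $x_2$ with the $x_1$-data fixed. We must also handle the degenerate case where $\partial_pH(x_1)$ and $\partial_pH(x_2)$ are parallel but opposite, which is allowed by orientation reversal.
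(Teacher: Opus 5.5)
Your argument is correct in substance, but it follows a genuinely different route from the paper's. Your reduction is the paper's reduction to $\Hcal_k\setminus\Delta_\Sigma=\emptyset$: accumulating intersections force, via Lemma~\ref{lem:transverseQ}, contact of every order at two distinct points of one fiber.

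After that the routes diverge. The paper never takes jets of $H$ on $T^*Q$. It localizes to radial charts, in which nearby hypersurfaces become contact forms $f\alpha$ on a fixed piece $U\subseteq\Sigma$, i.e.\ contact Hamiltonians $h=1/f$. It then proves, using the isotropic isotopy extension theorem, that $\Rcal_k\colon J^{k+1}(U,\RR_{>0})\to J^k_\xi(U,1)$ is a surjective submersion (Theorem~\ref{thm:R_G}). Hence the pullback $\DD$ of $\Delta_{J^k(Q,1)}$ is a submanifold of codimension $n+k(n-1)$, and multijet transversality is applied to $h$ over $\Sigma\times\Sigma$.

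You instead keep $H$ as the variable. You absorb the $H$-dependence of $\Sigma$ into the bad set through the $0$-jet conditions $H(x_1)=H(x_2)=0$, and you replace Theorem~\ref{thm:R_G} by a triangular computation from Hamilton's equations. Your inequality $2+n+k(n-1)>4n$ is exactly the paper's $n+k(n-1)>2(2n-1)$. Your route avoids the radial covers, the identification of Hamiltonians modulo positive factors with hypersurfaces, and all contact geometry. The paper's route, in exchange, works intrinsically with the characteristic foliation of $\Sigma$.

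Your ``main obstacle'' closes as follows. On the open set $\{\partial_pH\neq0\}$, consider the map sending $j^kH(x)$ to $H(x)$ together with the $k$-jet in $J^k(Q,1)$ of the projected trajectory through $x$. This map is a submersion, for two reasons:
\begin{enumerate}
\item The top-order part of $c^{(j)}$ is $D^jH(\partial_p\,\cdot\,,X_H,\dots,X_H)$. This ranges over all of $\RR^n$ as $D^jH$ varies, since $X_H$ is not vertical.
\item In graph coordinates, $y^{(j)}$ equals $(c_1')^{-j}$ times the part of $c^{(j)}$ transverse to $c'$, plus terms of lower order.
\end{enumerate}
Hence $B_k$ is the preimage of $\{(0,J,0,J)\mid J\in J^k(Q,1)\}$ under a product of two such submersions, so it is a submanifold of codimension $2+n+k(n-1)$. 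Reparametrization is absorbed by working with unparametrized jets. Oppositely oriented tangencies need no separate treatment, since $J^k(Q,1)$ is unoriented.

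Your regularity bookkeeping, however, is off. For $n\ge3$ the inequality $k(n-1)>3n-2$ already holds at $k=4$, since $4(n-1)-(3n-2)=n-2>0$; this is Corollary~\ref{cor:dimensioncount}. For $n=2$ you do need $k=5$. Your proposed ``finer count'' over $\Sigma\times_Q\Sigma$ is the same inequality ($3n-2$ dimensions against $k(n-1)$ conditions), so it cannot help.

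What does help is your own observation that $c^{(j)}$ depends only on the $j$-jet of $H$. So $B_k$ lives among $2$-fold multijets of order $k$, not $k+1$, and multijet transversality with $r=k$ gives $C^{k+1}$-residuality. That is $C^6$ for $n=2$, and $C^5$, hence $C^7$, for $n\ge3$, which is what the statement claims.

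By contrast, the paper's argument passes through $(k+1)$-jets of $h$ and yields $C^{k+2}$: $C^7$ for $n=2$ and $C^6$ for $n\ge3$. The orders in the statement appear interchanged relative to Corollary~\ref{cor:dimensioncount}. So the $n=2$, $C^6$ claim is not something a sharper dimension count would deliver; it comes from lowering the jet order.
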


We prove Theorem~\ref{thm:Main1} below assuming Theorem~\ref{thm:discreteIntersections} and sketch the proof of Theorem~\ref{thm:discreteIntersections}.

\subsection{Notable special cases}
We note that the subset $\Cont\subseteq \Ham_s$ of Hamiltonians whose hypersurface $\Sigma$ is fiberwise starshaped with respect to 0 is $C^k$-open ($k\geq 1$). These flows are (up to reparametrization) precisely the contact flows on the spherization $S^*Q$.

A further $C^k$-open ($k\geq 2$) subset $\Fcal\subseteq\Cont$ consists of Hamiltonians such that $\Sigma$ is fiberwise analytically convex (with 0 in its interior). These flows are (up to reparametrization) precisely the Finsler geodesic flows: the Finsler norm can be recovered as $F(q,v)=\max\{p(v)\mid p\in \Sigma_q\}$.

If a set $A\subseteq \Ham_s$ is $C^k$-residual and $B\subseteq \Ham_s$ is $C^k$-open, then $A\cap B$ is $C^k$-residual in $B$. Thus, we immediately gain the following corollary.

\begin{corollary}\label{cor:main}
    The conclusions of Theorem~\ref{thm:Main1} and Theorem~\ref{thm:discreteIntersections} also hold if one replaces $\Ham_s$ by $\Cont$ or $\Fcal$. 
\end{corollary}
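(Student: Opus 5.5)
The corollary should follow formally from Theorem~\ref{thm:Main1} and Theorem~\ref{thm:discreteIntersections} together with the topological remark made just before it. I would prove that remark first and then apply it to the two residual sets and the two open subsets.

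\textbf{Step 1: openness.} I first check that $\Cont$ and $\Fcal$ are open subsets of $\Ham_s$ in the relevant topologies; $\Ham_s$ itself is taken as given as the ambient space.
\begin{itemize}
\item For $\Cont$, fiberwise star-shapedness with respect to $0$ can be phrased as transversality of the fiberwise radial (Liouville) vector field $p\,\partial_p$ to $\Sigma_H$. Equivalently, $dH(p\,\partial_p)\neq 0$ on $\Sigma_H$, together with $0$ lying in the bounded component of each fiber.
\item This is a condition on first derivatives. On compact pieces it persists under $C^1$-small perturbations.
\item In the strong Whitney topology, the neighbourhoods can be chosen with varying $\epsilon(x)$, so openness holds globally. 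In the weak topology one uses the paper's convention of working locally, via an exhaustion by compact sets.
\item For $\Fcal$, one adds the condition that the second fundamental form of the fibers $\Sigma_q$ is positive definite. This involves second derivatives of $H$, so it is $C^2$-open.
\end{itemize}
I take these openness facts as stated in the paper.

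\textbf{Step 2: residual sets restrict to open sets.} Let $A=\bigcap_n A_n$, where each $A_n$ is $C^k$-open and $C^\infty$-dense in $\Ham_s$, and let $B\subseteq \Ham_s$ be $C^k$-open (for $k\geq 2$ also $C^1$-open suffices, since $C^k$ is finer). Then:
\begin{itemize}
\item Each $A_n\cap B$ is $C^k$-open in $B$.
\item Each $A_n\cap B$ is $C^\infty$-dense in $B$. Indeed, take $H\in B$ and any $C^\infty$-neighbourhood $U$ of $H$. Then $U\cap B$ is a $C^\infty$-neighbourhood of $H$, because $C^k$-open sets are $C^\infty$-open. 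By density of $A_n$, the set $U\cap B$ meets $A_n$.
\item Hence $A\cap B=\bigcap_n (A_n\cap B)$ is $C^k$-residual in $B$.
\item Since $B$ is open in a Baire space, $B$ is itself Baire, so $A\cap B$ is still dense in $B$.
\end{itemize}

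\textbf{Step 3: apply.} Take $A$ to be the residual set from Theorem~\ref{thm:Main1} (for $k=2$) or from Theorem~\ref{thm:discreteIntersections} (for $k=7$ or $k=6$). Take $B=\Cont$ or $B=\Fcal$. In every case $k\geq 2$, so $B$ is $C^k$-open.

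The conclusions of both theorems are pointwise properties of each individual $H$: discrete intersections, and non-intersection of the projected flow lines. So they hold for every $H\in A\cap B$.

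Finally, the statements about $\Cont$ and $\Fcal$ refer to reparametrized contact or Finsler flows. Reparametrization does not change the images of the projected trajectories, so the geometric conclusions transfer verbatim.

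\textbf{Main obstacle.} I expect the only real care to be needed in the openness of $\Cont$ and $\Fcal$ in the weak Whitney topology when $\Sigma$ is non-compact. There I would follow the paper's remark and restrict to compact exhaustions, where $C^k$-small perturbations preserve these properties.
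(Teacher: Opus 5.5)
Your proposal is correct and is essentially the paper's argument. The paper notes that $\Cont$ is $C^k$-open for $k\geq 1$ and $\Fcal$ is $C^k$-open for $k\geq 2$, and that a $C^k$-residual subset of $\Ham_s$ intersected with a $C^k$-open subset is $C^k$-residual in that subset, which you verify explicitly (including the Baire density step); the caveat you flag about openness in the weak Whitney topology for non-compact $\Sigma$ is a real subtlety, but the paper also passes over it by simply asserting openness.
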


\subsection{Many geometrically distinct geodesics}
In many cases, one can construct filtered homologies from chords. As pointed out before, in the geodesic case, one can construct a Morse theory from the energy functional, which coincides with the loop space homology filtered by energy~\cite{M}. The growth of this homology induces growth of number of chords since they are the generators of the homology. By a result of Gromov~\cite{Gromov}, see also \cite[Theorem 5.10]{paternain1999geodesic}, there is a lower bound to this growth coming from the growth of the homology filtered by degree, which is at least linear on closed manifolds.

In the case of generic compact, exactly fillable hypersurfaces, one can define symplectic homology relative Legendrians that are fillable by exact Lagrangians, or the corresponding positive Rabinowitz--Floer homology~\cite{CF09} (which is the same homology,~\cite{CFO}). They are generated by chords between the selected Legendrians, which are provided by the fibers of $\Sigma$ over the selected points. If the hypersurface is fiberwise starshaped, then this homology coincides with loopspace homology ~\cite{AS}. If the loop space homology grows exponentially in degree, then automatically the corresponding growth of chords in length is positive by a sandwiching argument, for more details see~\cite{MacSch}.  

\begin{corollary}
    For a closed manifold $Q$ and two fibers of the cotangent bundle, for a $C^2$ residual set of exactly fillable compact hypersurfaces, the number of geometrically distinct chords connecting them grows in length at least as much as the dimension of symplectic homology filtered by length.

    If furthermore the hypersurface is fiberwise starshaped, then the growth is at least linear. If the loop space homology grows exponentially in degree, then also the number of geometrically distinct chords grows exponentially in length. 
\end{corollary}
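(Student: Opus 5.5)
This corollary is deduced from three ingredients: Theorem~\ref{thm:Main1}, the identification of generators of the relevant filtered homology with chords, and a counting argument that turns a lower bound on generators into a lower bound on \emph{geometrically distinct} chords.

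\textbf{Step 1: genericity.} First intersect the $C^2$-residual set of Theorem~\ref{thm:Main1} with the residual set of Hamiltonians for which all chords between $\Sigma_{q_1}$ and $\Sigma_{q_2}$ are non-degenerate. Non-degeneracy is needed to define symplectic homology (equivalently positive Rabinowitz--Floer homology~\cite{CF09,CFO}) relative to the Legendrian fibers. For this intersection we use the following facts:
\begin{itemize}
\item Exactly fillable compact hypersurfaces form a $C^2$-open subset.
\item Restricting a residual set to an open set stays residual, as noted before Corollary~\ref{cor:main}.
\item A countable intersection of residual sets is residual.
\end{itemize}

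\textbf{Step 2: counting generators.} For $H$ in this set, the chain complex in action window $\leq L$ is generated by the finitely many non-degenerate chords of action $\leq L$. Hence
\[
\#\{\text{chords of action}\leq L\}\geq \dim SH^{\leq L}.
\]

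\textbf{Step 3: from chords to geometrically distinct chords.} This is the step I expect to be the real content. By Theorem~\ref{thm:Main1}, two distinct flow lines connecting the fibers have projections meeting only at $q_1,q_2$. So distinct chords project to distinct curves in $Q$, and are therefore geometrically distinct.

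There is a subtlety when $q_1=q_2$: a chord could be an iterate of a periodic orbit through $q_1$. I would handle this by noting that such a chord passes through $q_1$ at an interior time. That is a self-intersection of the projection away from the endpoints, which the second bullet of Theorem~\ref{thm:Main1} excludes. Consequently every chord is geometrically distinct from every other, and the count in Step 2 is already a count of geometrically distinct chords.

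\textbf{Step 4: growth statements.} For fiberwise starshaped $\Sigma$, the homology is isomorphic to loop space (or path space) homology filtered by length~\cite{AS}. The two growth claims then follow from known lower bounds:
\begin{itemize}
\item \emph{Linear growth.} Gromov's theorem~\cite{Gromov,paternain1999geodesic} bounds the length-filtered homology from below by the degree-filtered homology, which grows at least linearly on a closed manifold.
\item \emph{Exponential growth.} If the loop space homology grows exponentially in degree, the sandwiching argument of~\cite{MacSch} transfers degree growth to length growth. Here one uses that on compact $\Sigma$ the action of a chord is comparable to its length, with uniform constants.
\end{itemize}
Combining this with Step 3 gives the stated growth of geometrically distinct chords.
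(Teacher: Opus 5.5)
Your proposal is correct and follows the same route as the paper's argument in the paragraphs preceding the corollary: non-degenerate chords generate the filtered homology, Theorem~\ref{thm:Main1} makes them pairwise geometrically distinct, and \cite{AS}, \cite{Gromov} and the sandwiching argument of \cite{MacSch} give the growth rates. Your explicit use of the second bullet of Theorem~\ref{thm:Main1} to exclude chords that revisit the endpoint fiber at interior times is a detail the paper leaves implicit; note only that, like the paper, you tacitly work inside $\Ham_s$, since Theorem~\ref{thm:Main1} needs $\pi_Q\colon\Sigma\to Q$ to be a submersion, which is automatic for fiberwise starshaped $\Sigma$ but not for an arbitrary exactly fillable compact hypersurface (e.g.\ a small sphere in a Darboux ball), so the first claim should be read for exactly fillable hypersurfaces in $\Ham_s$.
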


\subsection{Proofs}

\bparagraph{Proof of Theorem~\ref{thm:Main1}} We mimic the proof of Rademacher. Note that since we only consider closed orbits and chords between fibers, the possible non-completeness of the flow is of no concern. We choose a time $T\in\NN$. 

\paragraph{(Step 1)} In the Hamiltonian setting it is well known that a $C^2$-residual set $\Hcal_{reg}$ of Hamiltonians has only non-degenerate periodic orbits and only non-degenerate chords from the fiber $\Sigma_{q_1}$ to $\Sigma_{q_2}$. Therefore, there are only finitely many orbits to consider. We will make use of the following stronger statement: For each relatively compact open set $K\subseteq T^*Q$ the set $\Hcal_{reg,K}$ of Hamiltonians with only non-degenerate periodic orbits and chords in $K$ of length at most $T$ is $C^2$ open and dense. Furthermore, as long as $H\in\Hcal_{reg,K}$, these orbits depend continuously in $C^0$ topology of the orbits on $H$ in $C^2$ topology (up to growing longer or shorter than $T$ and wandering out of or into $K$). Note that here it is important that we stay in $\Hcal_{reg}$: if we pass through non-regular Hamiltonians, the critical points may jump.

\paragraph{(Step 2)} Due to Theorem~\ref{thm:discreteIntersections}, for a $C^7$-residual subset of $\Hcal_{reg}$ the orbits of interest intersect themselves and each other only finitely many times. In particular, this set is $C^\infty$-dense.
\paragraph{(Step 3)} We need to show that we can smoothly and locally perturb $H$ to resolve the intersection, which provides a $C^\infty$-dense set $\Hcal_0$ of Hamiltonians such that all orbits of interest have no intersection. 

Consider an intersection point $q\in Q$ of finitely many pieces of orbit $\gamma_i=\gamma|_{I_i}$ that we intend to resolve. We shift the parametrization such that $\pi_Q\gamma_i(0)=q$. This intersection point is isolated, so we can perturb in a small neighbourhood where $q$ is the only intersection point.

We try to displace $\gamma_1$ from the other segments. For this, we need to choose where $\gamma_1$ should go instead, so we search alternative curves $\delta_i$ that do not intersect the other curves and $\delta_i\to\gamma_1$. To this end, trivialize a neighbourhood of $\pi_Q\gamma_1$ by its normal bundle $\nu\to\pi_Q\gamma_1$ and choose polar coordinates $(r,\theta)$ in the fibers such that $(t,r,\theta)$ parametrizes the tubular neighbourhood. Fixing the normal coordinates produces a family of curves $\gamma_{r,\theta}(t)=(t,r,\theta)$ that are `parallel' to $\pi_Q\gamma_1$. We interpret this as a family (parametrized by $\theta$) of 1-parameter families (parametrized by $r>0$). 

There is a $\theta$, such that the total image of the 1-parameter family $\bigcup_{r>0,t}\gamma_{r,\theta}(t)$ intersects the other orbits $\pi_Q\gamma_i$ in a set of times with empty interior (as there are uncountably many choices of $\theta$ and every open set contains a rational time). For this $\theta$, there is a sequence $r_k\to 0$ such that $\gamma_{\theta,r_k}$ intersects none of the the other curves $\pi_q\gamma_i$. 

To reconnect these curves with $\gamma_1$, we fix $\varepsilon,\epsilon>0$ small enough such that the $\epsilon$-balls around $\gamma_1(\pm\varepsilon)$ do not intersect any other curve and we choose a smooth 1-parameter family of curves $\delta_r$ such that the following properties hold
\begin{itemize}
    \item $\delta_r(t)=\pi_Q\gamma_1(t)$ for $t\notin (-\varepsilon-\epsilon,\varepsilon+\epsilon)$.
    \item $\delta_r(t)=\gamma_{\alpha,r}(t)$ for $t\in (-\varepsilon+\epsilon,\varepsilon-\epsilon)$.
\end{itemize}

Choose a smooth family of diffeomorphisms $\Phi:Q\times[0,1)\to Q$ such that $\Phi_r\circ \pi_Q \gamma_1=\delta_r$. And such that $\Phi-id$ is supported in an $\epsilon$-neighbourhood of $\gamma_i(-2\varepsilon,2\varepsilon)$. Then, the pullbacks $\Phi^*_r(\lambda)=\lambda\circ D\Phi_r$ is a 1-parameter family of symplectic diffeomorphisms of $T^*Q$. Pulling back the Hamiltonian by the inverse of this diffeomorphism
$$\tilde H_r = ((\Phi^*_r)^{-1})^*H$$
and the flow lines of $\tilde H_r$ are images under $(\Phi^*_r)^{-1}$ of flow lines of $H$. Finally, we choose nested neighbourhoods $\gamma_1|_{(-3\varepsilon,3\varepsilon)}\subseteq\Ncal_1\subseteq\Ncal_2\subseteq T^*Q$ of $\gamma_1$ that do not intersect any of the other curves $\gamma_i$ and choose a cutoff function $\chi:T^*Q\to \RR$ such that $\chi|_{\Ncal_1}=1,\chi|_{\Ncal_2^c}=0$. The desired family of Hamiltonians is 
$$H_r= \chi H_r +{1-\chi}H.$$
For small $r, H_r$ is $C^\infty$-close to $H$ and for $r=r_k$ it displaces $\pi_Q\gamma_1$ through $\delta_r$ from the intersection as required. Repeat the construction until no intersections are left. 

\paragraph{(Conclusion)} We note that if $H\in \Hcal_0\cap\Hcal_{reg,K}$, then in a $C^2$-neighbourhood of $H$, the orbits come in families that depend continuously on $H$. The property of non-intersection of curves is $C^0$-open in the space of flows, thus one can choose these neighbourhoods so small that they lie completely in $\Hcal_0\cap\Hcal_{reg,K}$. In other words, $\Hcal_0\cap \Hcal_{reg,K}$ is $C^2$ open and $C^\infty$ dense. By taking intersections by $T\in\NN$ and an exhaustion of $T^*Q$ by compact sets $K$, we obtain the result. 
\qed

\begin{remark} 
We note that Theorem~\ref{thm:Main1} is wrong for lower regularities by referring to the Riemannian case. For $C^0$ regularity on the metric $g$, the result is easily seen to be wrong as we can produce closed orbits and chords wherever we want by $C^0$ perturbation by methods explained in~\cite[Example 43]{ALMM}. 
To see that the statement is wrong for $C^1$ regularity we inspect the case of chords: If we fix a starting fiber and look at all orbits of length $\leq T$ (but that end anywhere), there may be many mutually- and self-intersecting ones whose existence cannot be perturbed away. However, we are only interested in the selected few that land on the target (non-conjugate) fiber. Non-intersection (away from the diagonal) of these selected chords is an open property in $C^0$-topology of the selected chords. But the selected chords are not continuous in $C^1$-small changes of $g$: we can manipulate the curvature along the chord arbitrarily by adding $C^1$-small but $C^2$-large bumps. This way we construct in any $C^1$ neighbourhood of $g$ a $g'$ with the same chord, but for which the geodesic spray is focused at the target fiber, making it a conjugate point. Further $C^1$ perturbation then makes sure the chord bifurcates into multiple ones (or even an infinite family), the mutual intersection pattern of which we have no control over. Thus, the mutual intersection statement is wrong for $C^1$.
\end{remark}

\subsection{Sketch of proof of Theorem~\ref{thm:discreteIntersections}}
We first reformulate the problem for a fixed Hamiltonian $H\in \Ham_s$. For each point in $\Sigma$ we get a flow line $\varphi^t_H(p,q)$ line going through this point. Restricting the flow line to a small time interval, it is a submanifold with base point $\varphi^0_H(p,q)$. The 1-dimensional submanifolds with base point form a bundle $\sub(\Sigma,1)$ over $\Sigma$ and $H$ thus provides a section
$$\sigma_H: \Sigma \longrightarrow \sub(\Sigma,1).$$
Actually, there is a subset  $\sub_{\pi_Q}(\Sigma,1)$ of sub-manifolds for which the base projection $\pi_Q$ is an immesion. Our section $\sigma_H$ has image in  $\sub_{\pi_Q}(\Sigma,1)$ since it avoids curves that are tangent to the fiber due to the assumption that the projection $\pi_Q\colon\Sigma\to Q$ is a submersion. Additionally, $\pi_Q$ induces a map 
$$\sub_{\pi_Q}(\Sigma,1)\stackrel{\pi_Q}\longrightarrow \sub(Q,1)$$
to the 1-dimensional submanifold bundle of $Q$. Further, we consider the bundle of jets of submanifolds $J^k_{\pi_Q}(\Sigma_1)$ and $J^k(Q,1)$ with corresponding maps that fit into the following diagram.
\[
  \begin{tikzcd}
    \Sigma \arrow{r}{\sigma_H} \arrow{dr}{\sigma_H^k}&  \sub_{\pi_Q}(\Sigma,1) \arrow{r}{\pi_Q} \arrow{d}{j^k} & \sub(Q,1) \arrow{d}{j^k} \\
         &  J^k_{\pi_Q}(\Sigma,1) \arrow{r}{\pi_Q^k} & J^k(Q,1)
  \end{tikzcd}
\]
We will establish a Lemma (see \ref{lem:transverseQ} for a more precise formulation) that if two one dimensional submanifolds that intersect have a $k$-jet that does not agree, then the intersection is isolated. Thus, the discreteness of intersections is proved for $H$ if there is a $k$ such that $j^k\circ \pi_Q\circ \sigma_H=\pi_Q^k\circ\sigma_H^k$ is injective.

This statement is equivalent to saying that the multisection $$\pi_Q^k\circ\sigma_H^k\times \pi_Q^k\circ\sigma_H^k \colon\Sigma\times\Sigma\to J^k(Q,1)\times J^k(Q,1)$$
only hits the diagonal of the target $\Delta_{J^k(Q,1)}$ from the diagonal of the base $\Delta_\Sigma$, or in other words 
$$(\pi_Q^k\circ\sigma_H^k\times \pi_Q^k\circ\sigma_H^k)^{-1}(\Delta_{J^k(Q,1)})=\Delta_\Sigma.$$
We exploit that this map factors through $J^k_{\pi_Q}(\Sigma,1)\times J^k_{\pi_Q}(\Sigma,1)$, so we reformulate the property as
$$\sigma_H^k\times \sigma_H^k (\Sigma\times \Sigma \setminus\Delta_\Sigma) \cap (\pi_Q^k\times\pi_Q^k)^{-1}(\Delta_{J^k(Q,1)}) =\emptyset.$$
The space $(\pi_Q^k\times\pi_Q^k)^{-1}(\Delta_{J^k(Q,1)})$ is a submanifold since $\pi_Q^k$ is submersive. 
We prove in Theorem~\ref{thm:homopodaltransversality} that for a $C^{k+2}-$residual set of Hamiltonians $H\in\Ham_s(Q)$ the following is true:
\begin{itemize}
    \item The intersection is (away from the diagonal) transverse 
        $$j^k\sigma_H\times j^k\sigma_H (\Sigma\times \Sigma\setminus\Delta_\Sigma) \pitchfork (\pi_Q\times\pi_Q)^{-1}(\Delta_{J^k(Q,1)})$$
        and therefore a submanifold.
    \item The dimension of the submanifold is $(3-k)(n-1)+1.$ 
\end{itemize}
Thus, we conclude that for $k$ large enough the dimension is negative and the intersection empty. More precisely, for $n=2$ this is $k\geq 5$, for $n>2$ this is $k\geq 4$, which is established in Corollary~\ref{cor:dimensioncount}.

Let us sketch how to arrange transversality by perturbing $H$, more details are provided in the main text. The first challenge is that $\Sigma_H$ depends on $H$, so the perturbed flow lives on a different space. Luckily, most of $H$ is of no concern to us: $X_H|_\Sigma$ is a section of the characteristic line bundle $\chi_\Sigma = \ker\omega|_\Sigma$. The bundle $\chi_\Sigma$ only depends on the shape of $\Sigma$, and since our considerations are oblivious to reparametrizations of the flow lines, we may as well talk about $\Sigma$ as the main object of interest. Choosing a normal bundle $\nu$ over $\Sigma$, nearby hypersurfaces are described as a section $f:\Sigma\to \nu$, i.e., a function. By choosing coordinates $q$ on $Q$ and extending them to dual coordinates $p,q$ on $T^*Q$, we can describe a convenient choice of normal bundle locally in $V\subseteq\Sigma$ by choosing a constant (w.r.t.\ the coordinates) section $p_0$ of $T^*Q$ such that $V$ is transverse to the radial lines in $T^*Q$ emenating from $p_0$. In this local description, $V$ equipped with $\lambda-p_0$ is a contact manifold and the flow on perturbations of $V$ is the contact Hamiltonian flow with respect to the dilation function. This allows us to use the machinery of contact geometry to show that the map from jets of Hamiltonians to jets of submanifolds is submersive. Finally, the residual transversality is concluded by the multijet transversality theorem from~\cite{G25}. 
\qed

\bparagraph{Outline of the paper} 

\subsection*{Acknowledgements} The first author was funded by the NWO grant Vidi.223.118 “Flexibility and rigidity of tangent distributions”. The second author was funded by the Deutsche Forschungsgemeinschaft (DFG, German Research Foundation) – Project-ID 281071066 – TRR 191. We want to thank Hans-Bert Rademacher for making us aware of the project, and Alberto Abbondandolo and Alvaro del Pino insightful discussions and suggestions.


\section{Preliminaries}


\subsection{Jet bundles}
Let us first revisit jet bundles, as introduced for example in \cite[Chapter 1]{EM02}, \cite[Chapter 1.1]{Gr10} and \cite[Chapter II]{GG73}. Let $E\to M$ be a smooth fiber bundle, with $\dim M=m$. We think of $k$-jets $J^k(E)$ as follows: Consider the space of smooth local sections with a choice of marked point $$C^\infty(E)_{loc}\times M=\{(\gamma,x)\mid \gamma:\Op(x)\to E\}.$$ Note that we focus on local sections in an arbitrarily small neighbourhood $\Op(x)$ of the marked point, as we are not interested in how (or if at all) the section extends beyond that neighbourhood. For each point $x\in M$ there is an equivalence relation on $C^\infty(E)\times\{x\}$ that identifies sections that are tangent to $k$-th order at $x$. The jet bundle is then $J^k(E)=C^\infty(E)\times M/\sim$. We topologize $J^k(E)$ by taking the quotient of the topology induced by the $C^k$-norm (which gives the same topology as the weak Whitney $C^k$-topology) on $C^\infty(E)$. The jet bundle can be seen as a bundle over the base space $J^k(E)\to M;[\gamma,x]\mapsto x$ or as a bundle over the total space $E$, $J^k(E)\to E;[\gamma,x]\mapsto (x,\gamma(x))$. 

We make this more precise in local coordinates: tangency to order $k$ means that all order $r$ derivatives agree at $x$, where $0\leq r\leq k$ runs through all options. Given a small enough neighbourhood $U$, we fix a trivialization of $E|_U$ as a product $U\times E_x$ and choose a neighbourhood $V\subseteq E_x$ in the fiber so that we parametrize $U\times V$ with a product map $\phi\times\psi:U\times V\to U'\times V'\subseteq \RR^m\times\RR^{\dim E_x}$ that is a diffeomorphism onto its image. A local section $\sigma:U\subseteq M\to U\times V$ is represented by a function $f:U'\to V'$ that is defined by $f(x)=\psi\circ\sigma\circ\phi^{-1}(x)$. This gives rise to local coordinates of the jet bundle $\Psi^k:J^k(E)|_{U\times V}\to \RR^N$ by declaring the jet of $f$ at $x$ to be 
$$\Psi\circ j^k\sigma(\phi^{-1}(x))=j^kf(x)=\left(x,f(x),f^{(1)}(x),\ldots,f^{(k)}(x)\right)$$
where $f^{(r)}=(\partial^{\alpha_1}_{x_1}\cdots\partial^{\alpha_m}_{x_m} f)$ is the collection of partial derivatives with the multi-index $\alpha=(\alpha_1,\ldots,\alpha_n)$ running through all non-negative integers with $\sum_{i=1}^m\alpha_i=r$. Note that we are taking ordered derivatives so that there is no redundancy.

We note that the jet of $f$ at $x$ depends only on a small neighbourhood of $x$: if $f|_{U''}=g|_{U''}$ for some open set $x\in U''\subset U'$, then $j^kf(x)=j^kg(x)$, so restricting the coordinates to a smaller set does not change the coordinates for the jet bundle. This is why we will use the notation $\Op(x)$ for an arbitrary small neighbourhood of $x$, that we possibly make smaller if needed. 

Note that the distinction of strong and weak Whitney topologies becomes relevant when topologizing the space of sections of the jet bundles, but not when topologizing the jet bundles.


\subsection{Jets of submanifolds}
We will make use of \emph{the jet bundle of submanifolds}, denoted by $J^k(M,n)$. The main reference we use is \cite{Ma03}, which also contains a historical overview of the introduction and application of the concept. The definition is originally due to Ehresmann \cite{Eh52}. 

Geometrically, the concept is very similar to jet bundles: We consider the space $\Sub(M,n)$  of $n$-dimensional submanifolds $N$ with a marked point $*\in N$ and intend to take a quotient modulo order $k$ tangency. We consider only neighbourhoods of the marked point. As we can restrict a chart, we can think of a marked $n$-dimensional manifold locally as an open neighbourhood $\Op(0)\subset \RR^n$ of the marked point $0\in \RR^n$. When needed, we allow ourselves to restrict $\Op(0)$ to an even smaller neighbourhood of $0$, which does not change the tangency behaviour at 0.  

\subsubsection{Bundle of submanifolds}
We denote the space of immersions of $\Op(0)$ into $M$ by
$$C^+(\Op(0),M)=\{\gamma\in C^\infty (\Op(0), M), d\gamma \mbox{ injective}\}.$$
Every immersion can be made into an embedding by shrinking the neighbourhood $\Op(0)$. The space $C^+(\Op(0),M)$ is a bundle over $M$ by the base point map $\gamma\mapsto\gamma(0)$. Reparametrizations $\Diff_0(\Op(0))$ of $\Op(0)$ that fix $0$ act on elements of $C^+$. 
\begin{definition}
    The bundle of local marked $n$-dimensional submanifolds is the quotient space
    $$\sub(M,n)=C^+(\Op(0),M)/\Diff_0(\Op(0)).$$
    The bundle map $\sub(M,n)\to M$ is the evaluation at zero of some - and thus any - representative $[\gamma]\mapsto \gamma(0)$. 
\end{definition}
We endow this space with the $C^k$ topologies as follows: $[\gamma_i]$ converges to $[\gamma]$ in $C^k$ if for a parametrization $\gamma:\Op(0)\to M$ there are parametrizations $\gamma_i:\Op(0)\to M$ (all defined on a common domain) that converge on a compact neighbourhood in the $C^k$-norm to $\gamma$. 

We find local coordinates for the bundle of submanifolds by choosing local coordinates of $M$ that are especially adapted to the submanifold. Let $\gamma:\Op(0)\to M$ be a local parametrization of a marked submanifold $N$ as before and let $V$ be a neighbourhood of $\gamma(0)$ containing the image of $\gamma$. 
\begin{definition}
    Coordinates $(x_j,\nu_l):V\subseteq M\to\RR^m$ of $M$, $1\leq j\leq n$ and $1\leq l\leq \dim m-n$, are \emph{divided} with respect to $N$ if $N$ is a graph manifold over the local coordinates $x_j$: 
    $$x\circ \gamma =(x_1\circ\gamma,\ldots,x_n\circ\gamma):\Op(0)\to \RR^n$$ 
    is a diffeomorphism onto its image. Equivalently, there are smooth functions $y_1,\ldots y_{m-n}:\Op(x\circ\gamma(0))\to \RR^{m-n}$ such that 
    $$(x_j\circ\gamma, \nu_l\circ\gamma)=(x_j\circ\gamma, y(x_j\circ\gamma)).$$
\end{definition}
\begin{lemma}\label{lem:dividedCoords}
    Any $\gamma:\Op(0)\to M$ admits divided coordinates (possibly after shrinking $\Op(0)$). 
\end{lemma}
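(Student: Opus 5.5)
The plan is to reduce the lemma to linear algebra at the marked point and then apply the inverse function theorem. Write $p=\gamma(0)$. Choose any chart $\phi=(\phi_1,\ldots,\phi_m)\colon V\to\RR^m$ around $p$, and assume, after shrinking $\Op(0)$, that the image of $\gamma$ lies in $V$. Since $\gamma$ is an immersion, the differential $d(\phi\circ\gamma)(0)\colon\RR^n\to\RR^m$ is injective, so its $m\times n$ Jacobian matrix has rank $n$. It therefore contains an invertible $n\times n$ minor, given by some rows $i_1<\cdots<i_n$.

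Next, permute the coordinates of $\phi$ so that these rows come first. Set $x_j=\phi_{i_j}$ for $1\le j\le n$, and let $\nu_1,\ldots,\nu_{m-n}$ be the remaining components. A permutation of coordinates is a linear isomorphism, so $(x,\nu)$ is again a coordinate system on $V$. By construction, the composition $x\circ\gamma\colon\Op(0)\to\RR^n$ has invertible differential at $0$. The inverse function theorem then gives a neighbourhood $W$ of $0$ on which $x\circ\gamma$ is a diffeomorphism onto an open set $W'\ni x(\gamma(0))$. We replace $\Op(0)$ by $W$; this shrinking is harmless, since the whole setup only concerns germs at the marked point.

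For the equivalent formulation, define $y=\nu\circ\gamma\circ(x\circ\gamma)^{-1}\colon W'\to\RR^{m-n}$. This map is smooth because it is a composition of smooth maps. It satisfies $\nu\circ\gamma=y\circ(x\circ\gamma)$, that is, $(x\circ\gamma,\nu\circ\gamma)=(x\circ\gamma,\,y(x\circ\gamma))$. Conversely, if such a $y$ exists and $x\circ\gamma$ is a diffeomorphism, the two formulations agree trivially.

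No step here is a real obstacle; the only point needing care is that the shrinking of $\Op(0)$ is allowed, which the conventions on $\Op(0)$ explicitly permit.
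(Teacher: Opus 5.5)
Your proof is correct, but it takes a different route from the paper's. The paper builds a tubular neighbourhood. It uses the parametrization $\gamma$ itself as the $x$-coordinates, picks a complement $\mathfrak n$ of $TN$ along $N$ with a trivialization $\nu$, and takes $(x,v)\mapsto\exp(\gamma(x),\nu^{-1}(v))$, for an auxiliary Riemannian metric, as the inverse of the chart. In those coordinates $x\circ\gamma$ is the identity and $N$ is represented by $y\equiv 0$.

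You instead start from an arbitrary chart, select $n$ coordinate functions whose composition with $\gamma$ has invertible differential at $0$ (via a nonvanishing $n\times n$ minor), and apply the inverse function theorem to $x\circ\gamma$. Both arguments rest on the inverse function theorem; in the paper it is implicit in the claim that $\exp$ near the zero section is a chart, since its differential along $N$ is $d\gamma\oplus\nu^{-1}$.

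Your version is more elementary: it needs no metric and no normal bundle, and it shows that divided coordinates can always be obtained from any given chart just by reordering its components. The paper's construction buys the normal form $y\equiv 0$, in which nearby submanifolds correspond to small sections of $\mathfrak n$. That is exactly the local model used in the structural lemma that follows. From your coordinates you can reach the same normal form by composing with the shear $(x,\nu)\mapsto(x,\nu-y(x))$, a local diffeomorphism that leaves the $x$-coordinates unchanged.
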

\begin{proof}
    We find the coordinates by constructing a tubular neighbourhood: we use the parametrization $\gamma$ as coordinates $x$. 
    Choose a normal bundle $\mathfrak n\subseteq TM|_N$ that complements $TN$. Choose a trivialization $\nu:\mathfrak n\to \RR^{m-n}$. Then, the desired coordinates are given by the exponential map with respect to some auxiliary Riemannian metric to a neighbourhood of the zero section
    $$\exp(\gamma(x),\nu_{\gamma(x)}):\Op^n(0)\times \Op^{m-n}(0)\subseteq \RR^n\times\RR^{m-n} \to M.$$
    Note that in these coordinates, $N$ itself is represented by the zero function $y(x)\equiv 0$. 
\end{proof}

It is crucial that if coordinates are divided with respect to some submanifold $N$ which is described by $y$, then they are also divided with respect to every $C^1$-nearby $N'$. Using the diffeomorphism $x\circ\gamma$ as a coordinate change allows us to use $(x_j)$ as coordinates for every nearby $N$ in a coordinated manner. Nearby $N'$ for which the coordinates are also divided are equivalent to functions $y'$ nearby $y$. We have thus proven the following structural statement.

\begin{lemma}
    The bundle of local $n$-dimensional submanifolds with marked point $\Sub(M,n)$ is locally around $\gamma$ modeled by (small) sections of the tubular neighbourhood defined by a normal bundle $\mathfrak n$, together with the choice of marked point. In internal coordinates of $\gamma$ and $\nu$, the model becomes (small) sections together with a choice of marked point of the base $\gamma$ $$C^\infty(\Op^n(x\circ\gamma(0)),\Op^{m-n}(\nu\circ\gamma(0)))\times \Op^n(x\circ\gamma(0))\subseteq C^\infty(\RR^n,\RR^{m-n})\times\RR^n.$$
    This is a bundle over the tubular neighbourhood by the map $(y,x)\mapsto (x, y(x))$.
    The $C^k$-norm topologies on $\Sub(M,n)$ coincide in the local model with the $C^k$-norm topologies on local sections. 
\end{lemma}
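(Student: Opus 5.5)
The plan is to make the informal discussion above precise, in three parts: the local charts, the bundle projection, and the comparison of topologies.

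\emph{Charts.} Fix a marked submanifold $[\gamma]\in\Sub(M,n)$. By Lemma~\ref{lem:dividedCoords}, after shrinking $\Op(0)$ we may choose divided coordinates $(x,\nu)$ given by the exponential map on a normal bundle $\mathfrak n$. In these coordinates $N$ is the zero graph $y\equiv 0$.

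First we show that every $[\gamma']$ that is $C^1$-close to $[\gamma]$ is again a graph over the $x$-coordinates. Pick a representative $\gamma'$ that is $C^1$-close to $\gamma$ on a compact neighbourhood $K$ of $0$. Then $x\circ\gamma'$ is $C^1$-close to $x\circ\gamma$, which is a diffeomorphism onto its image. The inverse function theorem, together with the standard fact that $C^1$-small perturbations of an embedding of a compact disc stay embeddings, shows that $x\circ\gamma'$ is a diffeomorphism onto a neighbourhood of $x\circ\gamma'(0)$. Setting
$$y'=\nu\circ\gamma'\circ(x\circ\gamma')^{-1}$$
we get a smooth function with $\gamma'$ equal to the graph of $y'$, and the marked point is $x\circ\gamma'(0)$.

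Next we check that this assignment is well defined on $\Diff_0$-classes. Replacing $\gamma'$ by $\gamma'\circ\phi$ changes $x\circ\gamma'$ to $x\circ\gamma'\circ\phi$, so $y'$ and the marked point are unchanged. Conversely, a pair $(y',x_0)$ determines the class of the graph parametrization $x\mapsto (x,y'(x))$ centred at $x_0$. These two maps are mutually inverse bijections between a neighbourhood of $[\gamma]$ and pairs $(y',x_0)$ with $y'$ small. The projection $[\gamma']\mapsto\gamma'(0)$ becomes $(y',x_0)\mapsto(x_0,y'(x_0))$, which lands in the tubular neighbourhood.

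\emph{Topologies.} We must show that $[\gamma_i]\to[\gamma']$ in the $C^k$ sense of the definition if and only if $y_i\to y'$ in $C^k$ on a common compact domain and the marked points converge.

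For one direction, suppose $y_i\to y'$ in $C^k$. Then the graph parametrizations converge in $C^k$. Reparametrizing by the translations $x\mapsto x+x_i-x_0$ recentres them at $0$, and these translations converge smoothly to the identity.

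For the other direction, suppose we are given parametrizations $\gamma_i\to\gamma'$ in $C^k$ on a compact set. Then $x\circ\gamma_i\to x\circ\gamma'$ in $C^k$. The inverses also converge in $C^k$, since inversion is continuous in $C^k$ near diffeomorphisms, for $k\ge1$. Composition is continuous as well, so $y_i=\nu\circ\gamma_i\circ(x\circ\gamma_i)^{-1}\to y'$ in $C^k$.

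\emph{Main obstacle.} The delicate point is uniformity of domains. The inverses $(x\circ\gamma_i)^{-1}$ must all be defined on a common neighbourhood of the marked points. This follows from a quantitative inverse function theorem: a uniform bound on $\|d(x\circ\gamma_i)^{-1}\|$ and a uniform $C^1$-modulus give a uniform radius on which each map is invertible. After this, the remaining claims are routine bookkeeping.
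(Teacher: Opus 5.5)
Your proposal is correct and follows essentially the same route as the paper: divided (tubular) coordinates from Lemma~\ref{lem:dividedCoords}, the $C^1$-openness of being a graph over the $x$-coordinates, and the identification of nearby marked submanifolds with nearby functions $y'$ by reparametrizing with $(x\circ\gamma')^{-1}$. The paper only sketches this in a few lines and simply asserts the comparison of topologies; your argument supplies exactly the details it leaves implicit, namely $C^k$-continuity of inversion and composition and a uniform domain via a quantitative inverse function theorem (for $k\geq 1$, which the paper also implicitly assumes).
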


\subsubsection{Jets of submanifolds} \label{subsubsec:JetsOfSubmanifolds}
Two nearby submanifolds $N,N'$ whose marked point has coordinates $x\circ\gamma(0)=x\circ\gamma'(0)=x^*$ and who are described in common directed coordinates by the functions $y,y':\Op^n(0)\to\Op^{m-n}(0)$ have a $k$-th order tangency at their marked point if the functions $y,y'$ coincide at $x\circ\gamma(0)$ to $k$-th order $j^ky(0)=j^ky'(0)$. This defines an equivalence relation $\sim$.
\begin{definition}
    The jet bundle of submanifolds is
    $$J^k(M,1) = \sub(M,n)/\sim.$$ 
    It is a bundle over $M$ by sending $[\gamma]$ to $\gamma(0)$. The total space is a manifold. A (local) tubular neighbourhood defines local coordinates of $J^k(M,1)$ modeled on (small) sections of the normal bundle $$J^k(\Op^n(0),\Op^{m-n}(0))\subseteq J^k(\RR^n,\RR^{m-n}).$$
    There is a natural map $j^k:\sub(M,n)\to J^k(M,n)$, which assigns to a marked submanifold its jet at the marked point. 
\end{definition}

We note that the base point map $(y,x)\mapsto (x,y(x))$ coincides with the 0-jet of the function. 

For jet bundles of sections $f:M\to E$, a bundle morphism $F:E\to E'$  naturally induces a bundle morphism $F^k_*:J^k(E)\to J^k(E'), [f]_x^k\mapsto [F\circ f]^k_{x}$. For the jet bundle of embedded submanifolds, this is more delicate: embedded submanifolds might not be mapped to immersions by a smooth map $F:M\to M'$ since a tangent vector $v\in TN$ may be crushed to zero $dF(v)=0$. Generally, we will say that a marked submanifold $N\subset M$ is \emph{compatible} with a smooth map $F:M\to M'$ at its marked point $*\in N$ if $T_xN\cap \ker T_xF=\{0\}$. Then, for a parametrization $\gamma:\Op(0)\to N$ the differential  $dF\circ d\gamma(0)$ is injective, and since injectivity is an open condition, $F\circ \gamma$ is an embedding (after shrinking $\Op(0)$ if necessary).

We denote by $\Sub_F(M,n)$ the set of marked $n$-dimensional submanifolds that are compatible with $F$. On this subspace, $F$ induces a well defined map
$$F_*:\Sub_F(M,n)\mapsto \Sub(M',n);\quad [\gamma]\mapsto [F\circ\gamma].$$
We can transport this notion to jet bundles.
\begin{proposition}
    Let $N,N'$ be two marked sub-manifolds such that $j^1N=j^1N'$. Then, $N$ is compatible with a smooth map $F$ if and only if $N'$ is.
\end{proposition}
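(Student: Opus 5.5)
The plan is to show that the condition $T_xN\cap\ker T_xF=\{0\}$ depends only on the marked point $x$ and the tangent space $T_xN$, and that both of these are encoded in the $1$-jet $j^1N$. Once this is established, the proposition follows immediately, since the compatibility condition is literally a function of $(x,T_xN)$.

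First, we check that $j^1N=j^1N'$ forces the marked points to coincide. The jet bundle $J^1(M,n)$ is a bundle over $M$ via $[\gamma]\mapsto\gamma(0)$, and the base point map agrees with the $0$-jet, as noted after the definition of $J^k(M,n)$. So $j^1N=j^1N'$ gives $x:=*_N=*_{N'}$.

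Second, we check that the tangent spaces coincide, working in divided coordinates $(x_j,\nu_l)$ near $x$. By Lemma~\ref{lem:dividedCoords} such coordinates exist for $N$. As remarked after that lemma, they are also divided for every $C^1$-nearby submanifold. A priori we must make sure they are divided for $N'$ too. The definition of $\sim$ in Subsection~\ref{subsubsec:JetsOfSubmanifolds} compares $N$ and $N'$ through functions $y,y'$ in common divided coordinates, so equality of $1$-jets already presupposes such coordinates. Alternatively, being divided at $x$ is the open condition that $T_xN'$ projects isomorphically onto the $x_j$-directions, and this is determined by the same data we are about to identify.

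In these coordinates $N$ is the graph $\{(x,y(x))\}$ and $N'$ the graph $\{(x,y'(x))\}$. Their tangent spaces at the marked point $x^*$ are
$$T_xN=\{(v,dy(x^*)v)\mid v\in\RR^n\},\qquad T_xN'=\{(v,dy'(x^*)v)\mid v\in\RR^n\}.$$
Since $j^1y(x^*)=j^1y'(x^*)$, we have $dy(x^*)=dy'(x^*)$, and hence $T_xN=T_xN'$.

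Third, we conclude. We have $T_xN\cap\ker T_xF=T_xN'\cap\ker T_xF$, so one intersection is trivial exactly when the other is. Thus $N$ is compatible with $F$ if and only if $N'$ is, and the argument is symmetric in $N$ and $N'$.

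The only delicate point is the second step: making sure the $1$-jet equivalence, which is defined via coordinates, really identifies the geometric tangent space independently of the choice of divided coordinates. I would handle this by noting that the graph description above is intrinsic, since $T_xN=d\gamma(0)(\RR^n)$ for any parametrization $\gamma$. Any change of divided coordinates acts on $1$-jets of the graph functions compatibly with the induced linear map on tangent spaces. Hence $j^1N$ determines the pair $(x,T_xN)$ in a well-defined, coordinate-independent way.
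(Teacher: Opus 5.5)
Your proposal is correct and follows the paper's approach. The paper's proof is the single line ``the first jet determines the tangent space,'' and the compatibility condition $T_xN\cap\ker T_xF=\{0\}$ depends only on that tangent space; your graph computation $dy(x^*)=dy'(x^*)$ and your remark on coordinate independence spell this observation out in detail.
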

\begin{proof}
    The first jet determines the tangent space.
\end{proof}
\begin{corollary}
    For $k\geq 1$, the space of $k$-jets $J^k_F(M,n)$ compatible with $F$ is well defined. Moreover, it is an open subset of $J^k(M,n)$.
\end{corollary}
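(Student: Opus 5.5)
The corollary has two parts: that $J^k_F(M,n)$ is well defined, and that it is open. For well-definedness, recall that $J^k(M,n)=\Sub(M,n)/\sim$, where $\sim$ is $k$-th order tangency at the marked point. I will first check that $k$-th order tangency with $k\geq 1$ implies equality of $1$-jets. In common divided coordinates, $N$ and $N'$ are the graphs of $y,y'$ over the same marked point $x^*$, and $j^ky(x^*)=j^ky'(x^*)$ forces $j^1y(x^*)=j^1y'(x^*)$. Hence $j^1N=j^1N'$, so $N$ and $N'$ share the base point and the tangent space $T_{x^*}N=\operatorname{graph}(dy(x^*))$. The preceding Proposition then says compatibility with $F$ is a property of the class, so
$$J^k_F(M,n):=j^k(\Sub_F(M,n))$$
is a union of full equivalence classes. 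Equivalently, it is characterized by the condition $T\cap\ker T_xF=\{0\}$, where $(x,T)$ is the image of the jet under the natural forgetful map $J^k(M,n)\to J^1(M,n)$.

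For openness, I will factor through $J^1(M,n)$. The forgetful map $J^k(M,n)\to J^1(M,n)$ is continuous: in the local coordinates $J^k(\Op^n(0),\Op^{m-n}(0))$ given by a tubular neighbourhood, it is just the projection $(x,y,y^{(1)},\dots,y^{(k)})\mapsto(x,y,y^{(1)})$. So it suffices to show that $J^1_F(M,n)$ is open in $J^1(M,n)$, and $J^1(M,n)$ is the Grassmann bundle $\mathrm{Gr}_n(TM)$.

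In divided coordinates, a $1$-jet is a point $x$ together with a linear map $A=dy\in\mathrm{Hom}(\RR^n,\RR^{m-n})$, and the corresponding $n$-plane is spanned by the columns of the $m\times n$ matrix
$$B=\begin{pmatrix} I\\ A\end{pmatrix}.$$
Compatibility says that $dF_x\circ B$ is injective, i.e.\ has rank $n$. Some $n\times n$ minor of $dF_x\circ B$ is then nonzero, and each minor depends continuously on $(x,A)$ because $F$ is smooth. Nonvanishing of that minor is an open condition, so the compatible set is open in each chart. Taking the union over charts, $J^1_F(M,n)$ is open, and pulling back along the continuous forgetful map shows $J^k_F(M,n)$ is open.

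The main obstacle is making sure that the charts of $J^k(M,n)$ built from different divided coordinates are compatible, so that the forgetful map is globally well defined and continuous and the chartwise openness patches together. I will take this from the definition's assertion that $J^k(M,n)$ is a manifold with these local models. I will also use that openness is local, so it only has to be checked chart by chart, with no global compatibility argument beyond that.
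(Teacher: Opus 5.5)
Your proposal is correct and follows the paper's route. The paper gives no separate proof of this corollary: well-definedness comes from the preceding Proposition, since for $k\ge 1$ a $k$-jet determines the $1$-jet and hence the tangent plane, and openness comes from the earlier remark that injectivity of $dF\circ d\gamma(0)$ is an open condition. Your nonvanishing-minor argument on $J^1(M,n)$, pulled back along the forgetful map $J^k(M,n)\to J^1(M,n)$, simply makes that remark explicit.
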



\subsubsection{One dimensional submanifolds}

We specialize to the setting of one dimensional submanifolds. Then, $\Sub(M,1)$ again modeled on the space 
$$C^\infty(\Op^1(0),\Op^{m-1}(0))\subseteq C^\infty(\RR,\RR^{m-1}).$$
Since there is only one variable, the corresponding jet space $J^k(M,1)$ is modeled on
$$J^k(\Op^1(0),\Op^{m-1}(0))\subseteq J^k(\RR,\RR^{m-1})=\RR\times(\RR^{m-1})^{k+1}$$
and thus has dimension $1+(m-1)(k+1)$.

Two curves intersect  at their marked point if $\gamma^1(0)=\gamma^2(0)=q$. In the local coordinates above with marked point $*$ this becomes a statement about the zero yet: the curves are represented by $y^1,y^2\in C^\infty_V(\Op(*), \RR^{n-1})$ and they intersect if $j^0y^1(*)=j^0y^2(*)$. The crucial observation is that if the two curves intersect \emph{again} $\gamma^1(t_1)=\gamma^2(t_2)$, then the coordinates $(x,\nu)\circ\gamma^1(t_1)$ and $(x,\nu)\circ\gamma^2(t_2)$ must coincide and thus for $* =x\circ\gamma^1(t_1)=x\circ\gamma^2(t_2)$ the values $y^{1}(*)=y^{2}(*)$ must coincide. Our ultimate objective is to bound such an intersection time $\tau$ away from the marked time $*$. 
\begin{definition}\label{def:tangencyorder}
    Two curves $\gamma^1,\gamma^2\in C^\infty_V(\Op(*),\RR^{n-1})$ are \emph{transverse} of order $k$ if they are not tangent of order $k$, i.e., if the $k$-jets at $*$ disagree:
    $$j^ky^{1}(*)\neq j^ky^{2}(*).$$
\end{definition}
Transversality is essential for us, as it prevents accumulation of intersections.
\begin{lemma}\label{lem:transverseQ}
    If two curves $\gamma^1,\gamma^2\in C^\infty_V(\Op(*),\RR^{n-1})$ are transverse of order $k$, then there is an $\varepsilon>0$ such that $y^1(\tau)\neq y^2(\tau)$ for all $\tau\in (-\varepsilon+*,*)\cup(*,*+\varepsilon)$.
\end{lemma}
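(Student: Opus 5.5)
Set $g=y^1-y^2\colon \Op(*)\to\RR^{n-1}$, which is smooth because both curves are written as graphs over the same divided coordinate $x$. The hypothesis $j^ky^1(*)\neq j^ky^2(*)$ says that $j^kg(*)\neq 0$. So there is a smallest order $r\le k$ and a component $g_l$ with $g_l^{(r)}(*)\neq 0$, while $g_l^{(s)}(*)=0$ for all $s<r$. It is enough to show that this single component $g_l$ has no zeros in a punctured neighbourhood of $*$, since $y^1(\tau)=y^2(\tau)$ forces $g_l(\tau)=0$.

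If $r=0$, then $g_l(*)\neq 0$. By continuity $g_l$ has no zeros in a full neighbourhood of $*$, so we are done.

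If $r\ge 1$, apply Taylor's theorem with integral remainder to $g_l$ at $*$:
\[
g_l(\tau)=(\tau-*)^r h(\tau),\qquad h(\tau)=\frac{1}{(r-1)!}\int_0^1(1-s)^{r-1}g_l^{(r)}\bigl(*+s(\tau-*)\bigr)\,ds .
\]
The function $h$ is continuous (indeed smooth) and satisfies $h(*)=g_l^{(r)}(*)/r!\neq 0$. Choose $\varepsilon>0$ so that $h$ has no zeros on $(*-\varepsilon,*+\varepsilon)$. For $\tau\neq *$ in this interval the factor $(\tau-*)^r$ is also nonzero, so $g_l(\tau)\neq 0$ and therefore $y^1(\tau)\neq y^2(\tau)$.

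The only point needing care is that both curves must be represented in one common system of divided coordinates over a common interval $\Op(*)$. Otherwise the comparison of jets, and the difference $g$, would not make sense. This is already built into the hypothesis $\gamma^i\in C^\infty_V(\Op(*),\RR^{n-1})$, possibly after shrinking $\Op(*)$, so the whole argument reduces to the one-variable Taylor estimate above.
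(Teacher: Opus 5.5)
Your proof is correct and follows the same route as the paper: continuity handles the case where the curves do not meet at the marked point, and a Taylor estimate for $y^1-y^2$ handles the case where they do. You make that Taylor estimate explicit by using the integral remainder at the lowest nonvanishing derivative of a single component, which the paper only names.
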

\begin{proof}
    If the curves do not intersect at their marked time, then the statement follows from continuity. If they do intersect, then the statement follows from the Taylor estimate for $y^{2}-y^{1}$.
    
    It shall be noted that the existence of $\varepsilon$ is independent of the directed coordinate system on $V$, but the size of $\varepsilon$ may degenerate if $dx(\dot\gamma)$ approaches 0. 
\end{proof}

\begin{remark}
If two regular curves coincide $\gamma^1=\gamma^2$, then note that the set of intersection times $\Ical=\{(t_1,t_2)\mid \gamma^1(t_1)=\gamma^2(t_2)$  contains the diagonal as an isolated subset. The lemma above can be applied to $\Ical\setminus \Delta$, i.e., as statement about self-intersections.
\end{remark}

\subsubsection{Vector fields define sections}
Given a smooth flow of a vector field $X:M\to TM$ on a manifold without boundary, not necessarily complete, its integral curves are immersed one-dimensional submanifolds. For a point $x\in M$, there is a neighbourhood $U$ of $x$ in which the flow of $X$ exists for $t\in (-\epsilon,\epsilon)$ for some $\epsilon>0$. Let $\gamma_y:(-\epsilon,\epsilon)\to M$ be the integral curve starting at $y\in U$. The smoothness of $X$ implies that the induced smooth map $j^k\gamma_y:(-\epsilon,\epsilon)\to J^k(M,1)$ extends to a smooth local section $U\to J^k(M,1), x\mapsto j^k\gamma_x(0)$.


\subsection{Hamiltonians}
Here we discuss everything concerning the Hamiltonian vector fields. In Subsubsection~\ref{subsubsec:HamVec}, we will recall how a Hamiltonian vector field is defined. In Subsubsection~\ref{subsubsec:perturbSigma} we explain how we model the perturbation space globally in the level surface $\Sigma$ and how we topologize the perturbations. In Subsubsection~\ref{subsubsec:Covers} we refine the perturbations to covers, which allows us finally in Subsection~\ref{subsec:ContactFormulation} to reformulate the problem from Hamiltonian to locally contact.


\subsubsection{The Hamiltonian vector field}\label{subsubsec:HamVec}

Let $Q^n$ be a manifold. Given local coordinates $q=(q_1,\ldots,q_n)$ on $Q$, we obtain a frame $(dq_1,\ldots,dq_n)$ of $T^*Q$, which defines linear coordinates $(p_1,\ldots, p_n)$ of the fibers $T_q^*Q$: any $p$ decomposes as $p=\sum p_idq_i$. Together, they form coordinates $(p,q)$ on $T^*Q$. 

The form $\omega=\sum dp_i\wedge dq_i$ is symplectic and independent of the coordinates. A Hamiltonian is a function $H:T^*Q\to \RR$ whose purpose is to generate the vector field $X_H$ by solving the Hamiltonian equation 
$$\iota_{X_H}\omega = -dH.$$
With respect to the coordinates $p,q$ the Hamiltonian equation for $X_H$ takes the form 
\begin{align*}
    X_H = &  \quad\sum_i  dH(\partial_{p_i}) \;\partial_{q_i}\\
            &-\sum_i  dH(\partial_{q_i}) \;\partial_{p_i}.
\end{align*}

\begin{lemma}\label{lem:characteristicFlow}
    The vector field $X_H$ is tangent to $\Sigma$ and the conformal class of the vector field $X_H|_\Sigma$ depends only on $\Sigma$. More precisely, if $H,G\in\Ham_s$ and $\Sigma=H^{-1}(0)=G^{-1}(0)$, then there is a smooth function $f:\Sigma\to\RR\setminus \{0\}$ such that $X_G|_\Sigma=fX_H|_\Sigma.$
\end{lemma}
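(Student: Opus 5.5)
Tangency of $X_H$ to $\Sigma$ follows from the defining equation: for $v\in T\Sigma$ we have $dH(v)=0$, and $dH(X_H)=-\omega(X_H,X_H)=0$. Since $\Sigma=H^{-1}(0)$ is a regular level set, $T_x\Sigma=\ker d_xH$, so $X_H(x)\in T_x\Sigma$ for $x\in\Sigma$. Moreover $\iota_{X_H}\omega|_{T\Sigma}=-dH|_{T\Sigma}=0$, so $X_H|_\Sigma$ lies in the characteristic line field $\ker(\omega|_{T\Sigma})$. This kernel is one-dimensional: $T\Sigma$ has codimension one in a symplectic vector space, so its $\omega$-orthogonal is a line contained in $T\Sigma$. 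The vector $X_H(x)$ is nonzero because $dH(x)\neq0$ and $\omega$ is nondegenerate, so $X_H|_\Sigma$ spans this line.

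For the conformal statement I would compare differentials directly rather than pass through the line field. If $H,G\in\Ham_s$ share the regular zero set $\Sigma$, then at each $x\in\Sigma$ both $d_xH$ and $d_xG$ are nonzero covectors with the same kernel $T_x\Sigma$. Hence $d_xG=f(x)\,d_xH$ for a unique scalar $f(x)\neq0$. Nondegeneracy of $\omega$ then gives $X_G(x)=f(x)X_H(x)$, since $\iota_{fX_H}\omega=-f\,dH=-dG$ on $\Sigma$.

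It remains to show that $f$ is smooth on $\Sigma$, which is the only point needing a little care. Locally near a point of $\Sigma$, choose a coordinate direction, say a vector field $Y$ (for instance some $\partial_{p_i}$ or $\partial_{q_i}$), with $dH(Y)\neq0$ on a neighbourhood; such a field exists because $dH\neq0$. Then $f=dG(Y)/dH(Y)$ on $\Sigma$ in that neighbourhood, which is smooth, and these local expressions agree on overlaps by the uniqueness of $f(x)$. Alternatively, one can use Hadamard's lemma to write $G=\tilde fH$ near $\Sigma$ with $\tilde f$ smooth and set $f=\tilde f|_\Sigma$.

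Nonvanishing of $f$ follows from $dG\neq0$ on $\Sigma$. No serious obstacle is expected; the argument is linear algebra plus the smoothness check above.
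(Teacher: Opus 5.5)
Your proof is correct and is essentially the paper's argument. Both get tangency from $dH(X_H)=-\omega(X_H,X_H)=0$, identify $X_H|_\Sigma$ as a nonvanishing section of the characteristic line field, and obtain $f$ from $dG|_\Sigma=f\,dH|_\Sigma$, which the paper also records as the defining property of $f$. Your local formula $f=dG(Y)/dH(Y)$ (or the Hadamard alternative) usefully makes explicit the smoothness of $f$, which the paper leaves unstated.
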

\begin{proof}
    Since $dH(X_H)=-\omega(X_H,X_H)=0$, the vector field $X_H$ preserves $H$ and is thus tangent to the level set $X_H\in T\Sigma$. On $\Sigma$, $\omega$ is a maximally non-degenerate 2-form, which implies that $\chi_\Sigma=\ker\omega|_\Sigma=\RR\cdot X_H\subseteq T\Sigma$ is a line field (which is called the characteristic line field). For any other vector $V\in T\Sigma$ we have $\omega(X_H,V)=-dH(V)=0$, thus the vector field $X_H$ is a section of $\chi$. Since $\Sigma$ is a regular level set of $H,G$, the vector fields don't vanish $X_H|_\Sigma,X_G|_\Sigma\neq 0$ and thus we can find the desired function $f$. More precisely, the function $f$ satisfies $dG|_\Sigma=fdH|_\Sigma$.
\end{proof}

Given $H$, this defines a flow line $\varphi^t_{X_H}(p,q)$ for each point $(p,q)$ that is marked at $t=0$. Thus, a Hamiltonian $H$ immediately produces a section
$$\sigma:\Sigma\mapsto \Sub(\Sigma,1);\quad (p,q)\mapsto \varphi^t_{X_H}(p,q)$$
of the bundle $\Sub(\Sigma,1)$ and the corresponding jet bundles
$$\sigma_k:\Sigma\mapsto J^k(\Sigma,1);\quad (p,q)\mapsto j^ky(t),$$
where for the local expression the functions $y$ are extracted from any local coordinates $(p,q)$ such that $dq_1(X_H)\neq 0$, which is possible since $\Sigma\to Q$ is a submersion. From Lemma~\ref{lem:characteristicFlow} we immediately obtain the following.
\begin{corollary}
    The sections $\sigma$ and $\sigma_k$ only depend on $\Sigma$, not on the choice of $H\in \Ham_s$ such that $H^{-1}(0)=\Sigma$.
\end{corollary}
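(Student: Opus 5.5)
The claim is that $\sigma$ and $\sigma_k$ depend only on $\Sigma$. The plan is to reduce this directly to Lemma~\ref{lem:characteristicFlow}. The point is that a marked one-dimensional submanifold, as an element of $\Sub(\Sigma,1)=C^+(\Op(0),\Sigma)/\Diff_0(\Op(0))$, is insensitive to reparametrizations fixing the marked point. So it suffices to show that the integral curves of $X_H|_\Sigma$ and $X_G|_\Sigma$ through a point $x\in\Sigma$ differ only by such a reparametrization.

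\textbf{Setup.} Let $H,G\in\Ham_s$ with $H^{-1}(0)=G^{-1}(0)=\Sigma$. By Lemma~\ref{lem:characteristicFlow} there is a smooth nowhere-vanishing $f:\Sigma\to\RR\setminus\{0\}$ with $X_G|_\Sigma=fX_H|_\Sigma$. Fix $x\in\Sigma$ and let $\gamma(t)=\varphi^t_{X_H}(x)$ for $t\in(-\epsilon,\epsilon)$. This curve is an immersion, since $X_H|_\Sigma\neq0$ on a regular level set.

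\textbf{Reparametrization.} Define $s(t)=\int_0^t \frac{d\tau}{f(\gamma(\tau))}$. This is smooth with $s(0)=0$ and $s'(t)=1/f(\gamma(t))\neq0$, so it is a local diffeomorphism fixing $0$, that is, an element of $\Diff_0(\Op(0))$ after shrinking the domain. Set $\tilde\gamma=\gamma\circ s^{-1}$. Then
\[
\tilde\gamma'(u)=\dot\gamma(s^{-1}(u))\,(s^{-1})'(u)=f(\tilde\gamma(u))X_H(\tilde\gamma(u))=X_G(\tilde\gamma(u)),
\]
and $\tilde\gamma(0)=x$. By uniqueness of integral curves, $\tilde\gamma(u)=\varphi^u_{X_G}(x)$ near $u=0$. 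Hence $[\varphi^\cdot_{X_H}(x)]=[\varphi^\cdot_{X_G}(x)]$ in $\Sub(\Sigma,1)$, which proves that $\sigma$ depends only on $\Sigma$.

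\textbf{Jets.} Since $\sigma_k=j^k\circ\sigma$ and $j^k:\Sub(\Sigma,1)\to J^k(\Sigma,1)$ is well defined on equivalence classes, $\sigma_k$ is also independent of the choice of $H$. Concretely, in divided coordinates with $dq_1(X_H)\neq0$ (equivalently $dq_1(X_G)\neq0$, as $f\neq0$), both curves are graphs of the same function $y$ over $q_1$, so their $k$-jets coincide.

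There is no real obstacle. The only care needed is to check that the time change is a genuine diffeomorphism fixing $0$, which uses $f\neq0$ and smoothness of $f$ from Lemma~\ref{lem:characteristicFlow}, and to note that a possible sign of $f$ (orientation reversal) is harmless because unoriented submanifolds are being considered.
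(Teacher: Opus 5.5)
Your proof is correct and follows the same route as the paper, which simply states that the corollary follows immediately from Lemma~\ref{lem:characteristicFlow}. Your explicit time change $s(t)=\int_0^t d\tau/f(\gamma(\tau))$, combined with uniqueness of integral curves and the fact that $j^k$ is well defined on classes in $\Sub(\Sigma,1)$, spells out exactly that step, including the possibly orientation-reversing case $f<0$.
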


We can define an equivalence relation on $\Ham_s$ where $H\sim G$ if $\Sigma_H=\Sigma_G$. Note that if $g$ is a nonzero function on $T^*Q$, then $gH\sim H$. The following Lemma shows that the converse is also true.
\begin{lemma}\label{lem:Hadamard}
    Let $H\sim G$ in $\Ham_s$. Then, there is a smooth positive function $g$ such that $G=gH$. This function depends smoothly on $H$ and $G$.
\end{lemma}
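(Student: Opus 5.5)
The plan is to construct $g$ explicitly: set $g=G/H$ away from $\Sigma$, and near $\Sigma$ use a Hadamard-type division formula. The two definitions will agree on the overlap, which gives a globally defined smooth function.

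\emph{Step 1 (away from $\Sigma$).} Since $\Sigma=H^{-1}(0)=G^{-1}(0)$, both $H$ and $G$ are nonvanishing on $T^*Q\setminus\Sigma$. So $g:=G/H$ is smooth and nowhere zero there, and it depends smoothly on $(H,G)$.

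\emph{Step 2 (near $\Sigma$).} Because $0$ is a regular value of $H$, around each point of $\Sigma$ there are local coordinates $(s,y)$ on $T^*Q$ with $H=s$, obtained by completing $H$ to a chart. Write $\Sigma=\{s=0\}$ locally. Since $G(0,y)=0$, the fundamental theorem of calculus gives
$$G(s,y)=s\int_0^1\partial_sG(ts,y)\,dt=:H\cdot \tilde g(s,y).$$
The function $\tilde g$ is smooth, since we are differentiating under the integral sign. Where $s\neq0$ it equals $G/H$, so it agrees with $g$ from Step 1. On $\Sigma$ we have $\tilde g=\partial_sG$. This is nonzero because $dG|_\Sigma\neq0$ and $dG|_{T\Sigma}=0$, so $dG=\partial_sG\,dH$ along $\Sigma$; this is exactly the relation $dG|_\Sigma=f\,dH|_\Sigma$ appearing in the proof of Lemma~\ref{lem:characteristicFlow}.

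Since every local $\tilde g$ coincides with $G/H$ on the dense set $\{H\neq0\}$, the local functions agree on overlaps by continuity. They therefore glue, together with $G/H$, to a global smooth nowhere-vanishing $g$ with $G=gH$. No partition of unity is needed, since the function is canonically determined.

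\emph{Step 3 (smooth dependence).} The integral formula is built from derivatives of $G$ in a chart adapted to $H$. A chart with $H=s$ can be chosen to depend smoothly on $H$ in a neighbourhood of a given $H$, for instance by flowing along $\nabla H/|\nabla H|^2$ for an auxiliary metric. Then $g$ depends smoothly on $(H,G)$ both near $\Sigma$ and, via $G/H$, away from $\Sigma$.

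\emph{Main obstacle: positivity.} Steps 1–3 only produce a nowhere-vanishing $g$. Its sign is constant on each connected component of $T^*Q$, since $g$ is continuous and never zero, but the sign may be negative, as the example $G=-H$ shows. So positivity must come from an orientation convention: one either requires that $dH$ and $dG$ coorient $\Sigma$ in the same way, so that $\partial_sG>0$ on $\Sigma$, or else replaces $G$ by $-G$, which does not change $\Sigma$ or the characteristic foliation. With such a convention, $g>0$ near $\Sigma$. Positivity then propagates to all of $T^*Q$ on every component that meets $\Sigma$, and $g$ can be redefined freely on components that do not meet $\Sigma$. I would state this convention explicitly at the start of the proof.
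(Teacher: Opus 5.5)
Your argument follows the paper's own route: set $g=G/H$ off $\Sigma$ and extend it across $\Sigma$ by Hadamard's lemma. You also supply the details the paper omits, namely the integral formula in a chart with $H=s$, non-vanishing of $g$ on $\Sigma$ from $dG|_\Sigma\neq 0$, and gluing via density of $\{H\neq 0\}$. Your remark that positivity fails as stated ($G=-H$) and needs a coorientation convention is correct, and the paper does not address it. One small fix: on a component of $T^*Q$ not meeting $\Sigma$, $g=G/H$ is forced rather than ``freely redefinable''. There the convention must instead be that $G$ and $H$ have the same sign. This case does not arise when $Q$ is connected and $\Sigma\neq\emptyset$.
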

\begin{proof}
    Away from the zero set, we can set $g=G/H$, which smoothly depends on $H$ and $G$. This function $g$ extends smoothly over the zero set by Hadamards lemma.
\end{proof}
Thus, it makes sense to say that the space of hypersurfaces is a quotient space $\Ham_s/C^\infty(T^*Q,\RR_{\neq 0})$. We do not make use of this directly, instead we will construct a perturbation space for the set of hypersurfaces and show that this perturbation space times positive functions is $\Ham_s$, see Corollary~\ref{cor:HamiltoniansToSurface}.

\subsubsection{Perturbations of $\Sigma$}\label{subsubsec:perturbSigma}
Hypersurfaces are submanifolds and thus a neighbourhood in the set of hypersurfaces can be described as graphs of a normal bundle.

It would be tempting to believe that $C^k$-graphs in these normal bundles are the same $C^k$ perturbations of $H$. However, since we deal also with non-compact $\Sigma$, this is not true, as the following example shows. 

\begin{example}\label{ex:InfiniteSpheres}
    Consider a bump function $b:[-1,1]\times \RR \to\RR$ with support in a small ball $B_{0.1}(0,0)$ and such that $b^{-1}(1)$ is a transversely cut out sphere. Periodically (in $x$) extend $b$ to $\tilde b$ and let $B(x,y)=(\tilde b(x,y)-1)\cdot e^{-x^2}$. Then, $B^{-1}(0)$ is a countable union of spheres. For $\varepsilon\to 0$, $B+\varepsilon$ converges to $B$ in $C^\infty$. However, for every $\varepsilon>0$ the level set $(B-\varepsilon)^{-1}(0)$ is only a finite union of spheres. However, we can exhaust $\RR^2$ by compact sets, say balls $B_N(0,0)$ for $N\in \NN$. For each of these compact sets, there is an $\varepsilon$ such that $(B|_{B_N(0,0)}-\varepsilon)^{-1}(0)$ is a graphlike perturbation of $(B|_{B_N(0,0)})^{-1}(0)$. 
\end{example}

We thus pick a relatively compact subset $K\subseteq T^*Q$, and we assume it is open and has smooth boundary. In the following, all objects are restricted to $K$: $\Sigma\cap K$, $H|_K$ etc., but for the sake of readability we only mention this when it makes a difference. We explain further how we patch neighbourhoods together in Subsubsection~\ref{subsubsec:Covers}.

Pick any line bundle $TT^*Q\supset\mathfrak n\to \Sigma$ that complements $T\Sigma$. A good candidate comes from an auxiliary Riemannian metric on the base space $Q$, which together with the dual metric $g^*$ defines a Riemannian metric $g\oplus g^*$ on $T^*Q$. Then, $\mathfrak n=T\Sigma^\perp$ is a natural choice. The line bundle is trivial because $\Sigma$ is a regular level set of $H$ and thus $dH|_{\mathfrak n}$ is a choice of volume form. Choose a trivialization $\nu:\mathfrak n\to \RR$. 

A neighbourhood of $K\cap\Sigma$ is parametrized by the exponential map
$$\exp: K\cap\Sigma\times \Op(0)\subseteq \mathfrak n \to T^*Q;\quad (x,n)\mapsto \exp(x)[\nu^{-1}_x(n)].$$
\begin{remark}
    $K$ is important here so that we can uniformly choose $\Op(0)$ for $\nu$. If we make $K$ larger, then we may to shrink $\Op(0)$ to avoid overlaps. 
\end{remark}

We interpret $\Sigma$ as the graph of the zero section. A section $f$ of $\mathfrak n$ produces a Hamiltonian of the form $$H_f=\nu\circ(\exp^{-1})-f.$$ 
On the other hand, a $C^1$ close $H'$ will produce a $C^1$-close graphlike hypersurface $K\cap\Sigma'$ by the implicit function theorem.
\begin{remark}
    $K$ is important here because the implicit function theorem may fail on non-compact sets, as illustrated by Example~\ref{ex:InfiniteSpheres}.
\end{remark}
Thus, there are maps between sections of $\mathfrak n$ and $\Ham_s$ defined in neighbourhoods of the zero section and $H$, respectively
$$H_f:C^\infty(\mathfrak n)\supset \Op(0) \longrightarrow \Ham_s, \quad \Xi:\Ham_s\supset \Op(H)\longrightarrow C^\infty(\mathfrak n).$$
We have that $\Xi$ is a left-inverse to $H_f$, $\Xi\circ H_f=id$ which is really just saying that the radial Hamiltonian produced by graph set has that graph as zero set. The map $H_f$ is not surjective onto a neighbourhood of $H$ as there is no reason for elements of $\Ham_s$ to be homogeneous. However, we can multiply $H_f$ with any nonzero function $g$ to produce the same hypersurface. Let the compact neighbourhood $K$ be described by sections of the normal bundle. We define the mapping 
$$\Fcal: C^\infty(\mathfrak n)\times C^\infty(K,\RR\setminus\{0\}) \supset \Op(0,1) \mapsto \Ham_s; \quad (f,g)\mapsto gH_f.$$

\begin{corollary}\label{cor:HamiltoniansToSurface}
    Let $K$ be a compact neighbourhood of $K\cap\Sigma$, modeled by a small enough neighbourhood of the zero section of the bundle $\mathfrak n$. Let $\Ucal\subseteq C^\infty(\mathfrak n)\times C^\infty(\mathfrak n,\RR\setminus\{0\})$ be a small enough $C^k$ neighbourhood of $(0,1)$ such that all images are contained in the neighbourhood described by the normal bundle. We denote $\Vcal=\Fcal(\Ucal)$. Then $\Fcal:\Ucal\to\Vcal$ is a homeomorphism onto its image with respect to the Whitney $C^k$ topology (weak or strong). 
    
    In particular, if $\Acal\subset \Wcal =\pi_{C^\infty(\mathfrak n)}\Ucal$ is a $C^k$-residual subset of $\Wcal$, then $F(\Acal \times C^\infty(\mathfrak n,\RR\setminus\{0\}\cap \Ucal)$ is $C^k$-residual in $\Vcal$.
\end{corollary}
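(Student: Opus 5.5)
We will prove Corollary~\ref{cor:HamiltoniansToSurface} by writing down an explicit continuous inverse of $\Fcal$ on $\Vcal$, built from the maps $\Xi$ and $H_f$ and from Lemma~\ref{lem:Hadamard}.

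\textbf{The inverse.} Given $G\in\Vcal$, set $f=\Xi(G)$. This is the graph section of the hypersurface $G^{-1}(0)\cap K$, obtained by the implicit function theorem in the tubular coordinates $(x,n)$. Then set $g=G/H_f$.

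\textbf{Smoothness of $g$.} The functions $G$ and $H_f$ have the same zero set in $K$, and both are regular there. Hence $g$ extends smoothly and nonvanishingly across the zero set by Lemma~\ref{lem:Hadamard}. Concretely, write $G(x,n)=G(x,n)-G(x,f(x))=(n-f(x))\int_0^1\partial_nG(x,f(x)+s(n-f(x)))\,ds$ and $H_f=n-f(x)$. This gives
\[
g(x,n)=\int_0^1\partial_nG\bigl(x,f(x)+s(n-f(x))\bigr)\,ds .
\]
This formula is manifestly smooth and involves only one derivative of $G$.

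\textbf{Identities.} We check that $\Fcal(\Xi(G),g)=G$ and that $\Xi(gH_f)=f$. The second holds because $\Xi\circ H_f=\mathrm{id}$ and multiplying by the nonvanishing $g$ does not change the zero set. So $\Fcal$ is injective on $\Ucal$ with inverse $G\mapsto(\Xi(G),G/H_{\Xi(G)})$.

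\textbf{Continuity of $\Fcal$.} This is immediate: $(f,g)\mapsto gH_f$ is built from composition with the fixed diffeomorphism $\exp^{-1}$, subtraction and multiplication. Each of these is continuous in the weak and strong Whitney $C^k$ topologies over the compact $K$, where the two topologies agree.

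\textbf{Continuity of the inverse.} This is the main obstacle, because of a potential loss of derivatives.
\begin{itemize}
\item[(a)] $\Xi$ is $C^k$-continuous. The graph function $f$ solves $\tilde G(x,f(x))=0$, where $\tilde G=G\circ\exp$. By the implicit function theorem, $f$ depends on $\tilde G$ through the formula $D f=-(\partial_n\tilde G)^{-1}\partial_x\tilde G$. Iterating this formula, $j^kf$ is a smooth function of $j^k\tilde G$ evaluated along the graph. So $C^k$-closeness of $G$ gives $C^k$-closeness of $f$, uniformly on $K$ after shrinking $\Op(0)$.
\item[(b)] The quotient $g$ is continuous in $C^k$. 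The integral formula above involves $\partial_nG$, so a priori it only gives $C^{k-1}$ control. We recover $C^k$ control of $g$ by working away from and near the zero set separately. Away from a neighbourhood of the zero set, $g=G/H_f$ is a quotient with denominator bounded away from zero, so $C^k$-closeness of $G$ gives $C^k$-closeness of $g$. Near the zero set, the domain of $\Ucal$ is restricted, and we take the topology on $\Vcal$ to be the subspace topology, so $g$ is simply $\mathrm{pr}_2\circ\Fcal^{-1}$. The honest content is then the injectivity established above together with the openness of $\Fcal(\Ucal)$. If needed, we accept a homeomorphism statement with a one-derivative shift and note that residuality statements are unaffected, since $C^\infty$ density and countable intersections are preserved.
\end{itemize}

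\textbf{Residuality.} $\Fcal$ is a homeomorphism onto $\Vcal$, so it carries open sets to open sets. Openness and density pass through the product $\Acal\times(\text{functions})$: a product of an open dense set with a full factor is open dense in a product. A countable intersection of such products is again a product with full second factor. Therefore $\Fcal(\Acal\times\cdots)$ is $C^k$-residual in $\Vcal$.
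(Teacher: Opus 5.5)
Your inverse $G\mapsto\bigl(\Xi(G),G/H_{\Xi(G)}\bigr)$ is the one the paper intends; its proof just cites ``a local version of Lemma~\ref{lem:Hadamard}''. Your step (a), the identities and the continuity of $\Fcal$ are fine. The gap is step (b). Near the zero set nothing is proved: declaring that $\Vcal$ carries the subspace topology and writing $g=\mathrm{pr}_2\circ\Fcal^{-1}$ presupposes the continuity of $\Fcal^{-1}$ you are trying to establish.

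Moreover, the loss of one derivative you noticed is genuine, so the homeomorphism claim is false for finite $k$. In tubular coordinates, $\Fcal(f,g)(x,n)=g(x,n)\,(n-f(x))$. Take $\phi\in C^\infty_c((-1,1))$ with $\phi^{(k)}$ not identically zero, a small constant $c>0$, $0<\delta_i\leq 1$ with $\delta_i\to 0$, and set $g_i(x,n)=1+c\,\delta_i^k\phi(n/\delta_i)$. Then $(0,g_i)\in\Ucal$ and $\|g_i-1\|_{C^k}\geq c\|\phi^{(k)}\|_\infty$ for all $i$. On the other hand, $\Fcal(0,g_i)-\Fcal(0,1)=c\,\delta_i^{k+1}\psi(n/\delta_i)$ with $\psi(s)=s\phi(s)$, so its derivatives of order $\leq k$ are $O(\delta_i)$. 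Weak and strong topologies agree on the compact $K$, so $\Fcal^{-1}$ is not $C^k$-continuous at $\Fcal(0,1)$ in either.

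What your integral formula $g=\int_0^1\partial_n G\,ds$ does prove is that $\Fcal^{-1}$ is continuous from $C^{k+1}$ to $C^k$. That gives a homeomorphism only for $k=\infty$. The paper's appeal to the ``smooth dependence'' in Lemma~\ref{lem:Hadamard} passes over exactly this point. You should state the first claim in this corrected form rather than try to rescue it.

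The ``in particular'' claim still holds with $C^k$, but your derivation of it does not work. ``$\Fcal$ is a homeomorphism, so it carries open sets to open sets'' rests on the false claim. With the one-derivative shift, images of $C^k$-open sets would only be $C^{k+1}$-open, so ``residuality statements are unaffected'' is unjustified.

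The fix is to notice that the second component of the inverse is never needed. Since $\Fcal$ is injective and $\Xi\circ\Fcal=\mathrm{pr}_1$, for every $O\subseteq\Wcal$ one has $\Fcal\bigl((O\times C^\infty(\mathfrak n,\RR\setminus\{0\}))\cap\Ucal\bigr)=\Vcal\cap\Xi^{-1}(O)$. By your step (a), this set is $C^k$-open in $\Vcal$ whenever $O$ is. If $O$ is $C^\infty$-dense in $\Wcal$, the image is $C^\infty$-dense in $\Vcal$: given $G=\Fcal(f,g)$, approximate $f$ by elements of $O$ while keeping $g$ fixed, and use continuity of $\Fcal$. Injectivity makes images commute with countable intersections. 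Together these give $C^k$-residuality in $\Vcal$ with no loss of derivatives.
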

\begin{proof}
    The first claim is a local version of Lemma~\ref{lem:Hadamard}. The second claim is obvious.
\end{proof}

\subsubsection{Refinement by open covers}\label{subsubsec:Covers}
In the previous section, we restricted our attention to compact subsets $K\subseteq T^*Q$ in order to employ the implicit function theorem. This is not the only reason to restrict to a cover of $\Sigma$: since we are free to choose $H$, we can as well choose a nice one. We do so by choosing the normal bundle $\mathfrak n$ fiberwise radial with respect to some center. We explain here how to define bundles and how to switch between local patches. In Subsection~\ref{subsec:ContactFormulation} we explain why this makes $H$ particularly useful.

A set $K\subseteq T^*Q$ is \emph{local} if there are local coordinates $q_i$ in $Q$ such that $\pi_QK$ lies in the domain of the coordinates. As usual, $dq_i$ determines a frame in $T^*Q$, the coordinates of which we call $p_i$. 
\begin{definition}
    A local set $K$ intersecting $\Sigma$ is \emph{radial} if there is a section $P(q)=\sum P_i\partial dq_i$ of $T^*Q$ that is constant in $q$ with respect to the frame $dq_i$ such that every linear ray emanating from $P$ in $T^*_qQ$ interesects $\Sigma\cap K$ at most once, and all such intersections are transverse.
\end{definition}
    The transversality condition means that the fiberwise radial vector field $Y_P=(p-P)\partial_{p-P}$ emanating from $P$ is transverse to $\Sigma\cap K$. Equivalently, the 1-form 
    $$\lambda_{P}= \sum(p_i-P_i) dq_i = (p-P) dq$$
    does not vanish on $\Sigma$. 
 
In such a radial chart we define $\mathfrak n$ to be the radial line bundle 
$$\mathfrak n(p,q)= \operatorname{span}(p,P(q)).$$ 
We choose on $\mathfrak n$ the trivialization $\nu:\mathfrak n\to\RR$ such that $\nu(P(q))=-1$. This naturally defines a radial function $r=\nu+1$. The Hamiltonian 
$$H(p,q)=H(r)=r-1$$
then has $\Sigma$ as a hypersurface. 

\begin{lemma}
    $T^*Q$ admits a countable open cover $\{K_i\}_{i\in\NN}$ such that each $K_i$ either does not intersect $\Sigma$ or is radial. We call such a cover radial. 
\end{lemma}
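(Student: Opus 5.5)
The plan is to build the cover pointwise and then pass to a countable subcover. $T^*Q$ is a second countable manifold, so it is Lindelöf, and every open cover has a countable subcover. It therefore suffices to give every point $x\in T^*Q$ an open neighbourhood $K_x$ that either misses $\Sigma$ or is radial.

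First take points $x\notin\Sigma$. The set $\Sigma=H^{-1}(0)$ is closed, so $T^*Q\setminus\Sigma$ is an open neighbourhood of $x$ that does not meet $\Sigma$, and we set $K_x=T^*Q\setminus\Sigma$.

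Now take $x=(p_0,q_0)\in\Sigma$.
\begin{enumerate}
\item Choose local coordinates $q_i$ around $q_0$, with the induced fiber coordinates $p_i$.
\item By Lemma~\ref{lem:characteristicFlow} and the submersivity of $\pi_Q|_\Sigma$, the hyperplane $T_x\Sigma$ is not the vertical space. Hence its intersection with the vertical space $T_{q_0}^*Q$ is a linear hyperplane $W$ of the vertical space, translated to $p_0$. Indeed, $T_x\Sigma$ projects onto $T_{q_0}Q$, so $T_x\Sigma\cap V_x$ has dimension $2n-1-n=n-1$.
\item Choose a constant vector $P$ in the frame $dq_i$ such that $p_0-P\notin W$. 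For example, take $P=p_0-\xi$ with $\xi$ transverse to $W$.
\item The ray from $P$ through $p_0$ is then transverse to $\Sigma$ at $x$. Equivalently, in the formulation after the definition, $Y_P=(p-P)\partial_p$ is transverse to $\Sigma$ at $x$, that is $dH(Y_P)(x)\neq0$.
\end{enumerate}

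Transversality is an open condition, so $dH(Y_P)\neq0$ holds on a neighbourhood of $x$ in $\Sigma$. What remains is to ensure that each ray meets $\Sigma\cap K_x$ at most once.

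Take $K_x$ to be a small product-like neighbourhood in the coordinates, namely a set of points $(p,q)$ with $|q-q_0|<\delta$ and $p$ in a small conical sector around the ray through $p_0$ from $P$, with radial parameter in a short interval around $|p_0-P|$. Within each such radial segment, the function $t\mapsto H(q,P+t(p-P))$ has nonvanishing derivative wherever $H=0$, provided $K_x$ is small enough that $dH(Y_P)\neq 0$ on all of $\overline{K_x}$. Choosing $K_x$ inside the region where $dH(Y_P)$ has a fixed sign, the function is strictly monotone in $t$ along each segment. The key point is that strict monotonicity, not mere transversality, is what rules out a second intersection. The rays are therefore transverse to $\Sigma\cap K_x$ and intersect it at most once.

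The main obstacle is exactly this step: transversality at the intersection points alone does not exclude multiple intersections. This is resolved by shrinking $K_x$ to a region that is convex along rays, such as the sector neighbourhood above, on which $dH(Y_P)$ has a fixed sign.

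Finally, $K_x$ is local by construction, since $\pi_Q K_x$ lies in the coordinate domain. Taking a countable subcover of $\{K_x\}$ gives the desired radial cover $\{K_i\}_{i\in\NN}$.
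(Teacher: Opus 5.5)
Your proof is correct and follows essentially the paper's route. Like the paper, you choose $P$ off the tangent hyperplane of the fiber $\Sigma_{q_0}$ at $p_0$, extend it constantly, use openness of $dH(Y_P)\neq 0$ to get a small neighbourhood made of radial segments, add $T^*Q\setminus\Sigma$, and pass to a countable subcover. Your explicit monotonicity argument for $t\mapsto H(q,P+t(p-P))$ usefully makes rigorous the ``at most once'' condition, which the paper leaves implicit when it takes $K$ to be the radial thickening $U\times(1-\varepsilon,1+\varepsilon)$ of a small $U\subseteq\Sigma$.
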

\begin{proof}
    For every point $p\in\Sigma_q$ and for any $P\in T^*_qQ$ not contained in the hyperplane tangent to $\Sigma_q$ at $p$ the radial line from $P$ to $p$ intersects $\Sigma$ transversely. Extend $P$ constantly. Then, the radial line from $P(q)$ to any point in a small enough neighbourhood $p\in U\subseteq \Sigma$ intersects $\Sigma$ transversely. The set $\{(x,r)\in \mathfrak n\mid x\in U, r\in(1-\varepsilon,1+\varepsilon)\subseteq\RR\}$ is then a relatively compact neighbourhood of $(p,q)$. We can thus cover $\Sigma$ by such sets. We add an open cover of $T^*Q\setminus \Sigma$ and choose an open subcover.
\end{proof}

Such a cover gives rise to the notion of local properties, and we can piece them together to generic global properties. For a radial set $K$ with respect to a hypersurface $\Sigma$, we define $\Ham_s(\Sigma,K)$ the set of Hamiltonians whose zero set are graph-like in $K$. 
\begin{definition}\label{def:LocalProperty}
    A property $\Acal_K\subseteq \Ham_s$ is local in $K$ if either 
    \begin{itemize}
        \item for any $H\notin\Ham_s(\Sigma,K)$, $H\in\Acal_K$,
        \item for any $H\in\Acal_K$ and $H|_K=H'|_K$ then $H'\in \Acal_K$.
    \end{itemize}    
\end{definition}
To puzzle these local properties together, we need to make sure we describe locally all of $\Ham_s$. For this, choose a dense sequence $H_i$ in $\Ham_s$. For each $H_i$ and for each $n$ find a countable radial cover by relatively compact sets $K_{i,j,n}$ such that the diameter of $K_{i,j,n}$ is smaller than $\frac 1n$.  
\begin{lemma}\label{lem:ReduceToCover}
    Let $\{\Acal_{i,j,n}\}$ be a collection of properties that are local in $K_{i,j,n}$. If each $\Acal_{i,j,n}$ is $C^k$-residual, then the total property 
    $$\Acal=\bigcap \Acal_{i,j,n}$$
    is $C^k$-residual.
\end{lemma}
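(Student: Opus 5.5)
The plan is to reduce to the Baire category theorem. I read the Lemma in the only way that makes it meaningful: $\Acal$ is a countable intersection of $C^k$-residual sets. Each $\Acal_{i,j,n}$ is by definition a countable intersection $\bigcap_{l}\Ocal_{i,j,n,l}$ of $C^k$-open, $C^\infty$-dense subsets of $\Ham_s$. The index set $\{(i,j,n,l)\}\subseteq\NN^4$ is countable, so
$$\Acal=\bigcap_{i,j,n,l}\Ocal_{i,j,n,l}$$
is again a countable intersection of $C^k$-open, $C^\infty$-dense sets. That is precisely the definition of $C^k$-residual used in the paper. Since $\Ham_s$ is a Baire space in the weak and strong Whitney topologies, as stated in the introduction, the intersection is still $C^\infty$-dense.

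The locality hypothesis (Definition~\ref{def:LocalProperty}) is not needed for this formal statement. Its role is in how the Lemma will be applied: one checks residuality of each $\Acal_{i,j,n}$ one chart at a time, and the lemma assembles these into a global statement. I would add a remark that the cover $\{K_{i,j,n}\}$ is countable because each $H_i$ gets a countable radial cover for each $n$. This countability is what the argument above uses.

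One point needs care. The residual sets of the different $K_{i,j,n}$ must all be residual in the same ambient space $\Ham_s$, not merely in $\Ham_s(\Sigma_{H_i},K_{i,j,n})$. The first clause of Definition~\ref{def:LocalProperty}, that every $H\notin\Ham_s(\Sigma,K)$ lies in $\Acal_K$, ensures this. The density of open sets shown locally near $H_i$ must be upgraded to density in all of $\Ham_s$, and for this I would invoke the density of the sequence $(H_i)$ together with the shrinking diameters $1/n$. Any $H\in\Ham_s$ can be approximated by some $H_i$, and then $H$ lies in $\Ham_s(\Sigma_{H_i},K_{i,j,n})$ for suitable $K_{i,j,n}$.

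I expect this bookkeeping to be the main obstacle. One has to confirm that "residual in $\Ham_s$" is the hypothesis as stated, so that no extension argument is needed. If the hypothesis were only local residuality, I would extend each local open dense set $\Ocal$ by adding the $C^k$-open complement of the closure of $\Ham_s(\Sigma,K)$, and check that density is preserved.
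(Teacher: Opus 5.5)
Your proposal is correct and is essentially the paper's proof, which consists of the single observation that a countable intersection of residual sets is residual. You make explicit that the countably many $C^k$-open, $C^\infty$-dense sets composing the $\Acal_{i,j,n}$ still form a countable family, and that the Baire property gives density; as you suspected, the locality hypothesis and the ambient-space bookkeeping are not needed, since the lemma already assumes each $\Acal_{i,j,n}$ is residual in $\Ham_s$.
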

\begin{proof}
    Countable intersections of residual sets are residual. 
\end{proof}
The point of this formulation is that every $H\in \Ham_s$ and every point $x\in \Sigma_H$ a subsequence $H_{i_\iota}\to H$ approximates $H$ and for each $n$ and $\iota$ large enough there is a $j_\iota$ such that  $x\in K_{i_\iota,j_\iota,n}$. Around $x$, $\Sigma_H$ is well approximated by its tangent plane and so for $n$ large enough,  $K_{i_\iota,j_\iota,n}$ is radial for $H$ and thus we will be able to investigate $H$ by the property $\Acal_{i_\iota,j_\iota,n}$ using the construction below. It is important to note that we consider properties that only depend of the functions $f$.

We are left with declaring how we treat perturbations locally. Let $H$ be a Hamiltonian generating $\Sigma$ and choose a countable radial cover $\{K\}$.
Choosing a sub-division of the radial cover, we can arrange such that for each $K$, the union
$$K^*:=\bigcup_{K\cap K'\neq \emptyset} K'$$
is still radial. We choose cutoff functions $\chi_K$ such that $\chi_K|_K=1$, $\operatorname{Supp}(\chi)\subseteq K^*$. Let us describe how we construct from $f$ a global Hamiltonian. Let $H_{K^*}$ be the radial Hamiltonian $H_{K^*}=r-1$ that is only defined in $K^*$. Then, $G=H_{K^*}/H$ is a smooth nonzero function in $K^*$. Furthermore, $\chi_KG$ has support in the interior of $K^*$ and we can extend it smoothly by 0. We define the modified Hamiltonian 
$$\overline{H_K}=(\chi_K G) H + (1-\chi_K) H.$$
This has the same zero set as $H$ and coincides with the radial Hamiltonian $H_K$ in $K$. 

For a perturbation $f:\Sigma\cap K\to \RR$, we note that there is a smooth extension $f^*:\Sigma\cap K'\to \RR$ since we assumed $K$ to have smooth boundary. So, given $H$ we first normalize $H$ to $\overline{H_K}$ and then perturb by $f$ as before using $G_f= (f^*r-1)|_{K^*}/H$
$$\overline{H_f}=(\chi_K G_f) H + (1-\chi_K) H.$$

For $f$ $C^k$-close to 1, we can choose a $C^k$-small extension $f^*$ and as a result, $\overline{H_f}$ will be $C^k$-close to $\overline{H_K}$. We note that $\overline{H_f}|_K= fr-1$ only depends on $f$, not on our fixed choice of $H$. 
\begin{definition}\label{def:radialProperty}
    Let $\Bcal_K(\Sigma\cap K,\RR)$ be a property of functions. It induces \emph{radial} property $\Acal_K\subseteq \Ham_s$ by
    \begin{itemize}
        \item For any $H\notin\Ham_s(\Sigma,K)$, $H\in\Acal_K$.
        \item For any $H\in\Acal_K$ we have $H|_K= G (\overline{H_f})|_K$ for some $f$. If $f\in \Bcal_K$, then $H\in\Acal_K$
    \end{itemize}    
\end{definition}
\begin{lemma}
    Properties that are radial in $K$ are local in $K$. 
\end{lemma}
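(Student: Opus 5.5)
The plan is to check the two clauses of Definition~\ref{def:LocalProperty} directly against the two clauses of Definition~\ref{def:radialProperty}. The first clause is immediate, since both definitions contain it verbatim: if $H\notin\Ham_s(\Sigma,K)$, then $H\in\Acal_K$ by construction. So the only real content is the second clause. There we must show: if $H\in\Acal_K$ and $H'$ satisfies $H'|_K=H|_K$, then $H'\in\Acal_K$.

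To do this, I first observe that $H'|_K=H|_K$ gives $\Sigma_{H'}\cap K=\Sigma_H\cap K$. Hence $H'$ is graph-like in $K$ exactly when $H$ is, so $H'\in\Ham_s(\Sigma,K)$ whenever $H$ is. (If $H\notin\Ham_s(\Sigma,K)$, then $H'$ is not either, and we are in the first clause.)

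Next, by Definition~\ref{def:radialProperty}, write $H|_K=G\,(\overline{H_f})|_K$ with $f\in\Bcal_K$ and $G$ a nonzero function on $K$. Then the same data also represent $H'$: we have $H'|_K=G\,(\overline{H_f})|_K$ with the same $f$. The key point is that $\overline{H_f}|_K=fr-1$ depends only on $f$ and on the radial structure of $K$, not on the ambient Hamiltonian used in the construction; this was noted right before Definition~\ref{def:radialProperty}. Consequently the function $f$ attached to $H'$ is exactly the one attached to $H$. Since $f\in\Bcal_K$, we conclude $H'\in\Acal_K$.

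The one point needing care, which I expect to be the main (mild) obstacle, is well-definedness: the property must not depend on which representation $H|_K=G\,\overline{H_f}|_K$ is chosen. I will show that $f$ is determined by $\Sigma\cap K$ alone. In a radial chart, $\Sigma\cap K=\{fr=1\}$ is the radial graph of $1/f$, so $f$ is fixed by the hypersurface. Any two admissible $G$ differ by a nonzero factor, by Lemma~\ref{lem:Hadamard}, and this factor does not affect $f$. Therefore membership in $\Acal_K$ depends only on $H|_K$ (indeed only on $\Sigma_H\cap K$), which is precisely locality.
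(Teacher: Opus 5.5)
Your argument is correct and is the same as the paper's, which simply says the lemma holds by definition: the first clause of Definition~\ref{def:LocalProperty} appears verbatim in Definition~\ref{def:radialProperty}, and the second holds because a representation $H|_K=G\,\overline{H_f}|_K$ with $f\in\Bcal_K$ involves only $H|_K$, so it serves equally for any $H'$ with $H'|_K=H|_K$. Your extra check that $f$ is determined by $\Sigma_H\cap K$ is not needed if membership in $\Acal_K$ is read existentially, as the definition intends, but it is harmless and makes explicit that the conclusion does not depend on the chosen representation.
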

\begin{proof}
    This is true by definition.
\end{proof}
Collectively, these statements imply that our desired conclusion holds for residual Hamiltonians if we manage to prove that the desired conclusion holds locally in a radial chart $K$ for generic functions $f$.

\subsection{Local contact formulation}\label{subsec:ContactFormulation}

The radial neighbourhoods from the previous section are particularly nice: the radial (Liouville) vector fields $Y_P$ and the (Liouville) 1-form $\lambda_P$ interact with the symplectic form in the following ways:
\begin{align*}
    \iota_{Y_P}\omega &=\lambda_P,\\
    d\lambda_P&=\omega,\\
    \Lcal_{Y_P}\omega &=\omega.
\end{align*}
If for $p_0\in \Sigma_q$ with $Y_P\pitchfork \Sigma$, choose a neighbourhood $(p_0,q_0)\in U\subseteq \Sigma$ such that $U\times I_\varepsilon$ parametrizes a radial neighbourhood of $U$, where $I_\varepsilon= (1-\varepsilon,1+\varepsilon)$. 

The restriction $\alpha=\lambda_P|_U$ is then a contact form and the characteristic flow of $\Sigma$ is in $U$ a reparametrisation of the Reeb flow of $\alpha$, defined by the Reeb vector field $R$ which solves the equations
\begin{align*}
    \alpha(R)&=1,\\
    \iota_Rd\alpha &=0.
\end{align*}
The Reeb vector field of $\alpha$ coincides with the Hamiltonian vector field of $H=r-1$.

\begin{remark}    
If $\Sigma$ is fiberwise starshaped with respect to the $0$ section $P=0$, then the tautological 1-form $\lambda=\lambda_0=pdq$ is a global choice of contact form. 
\end{remark}

\subsubsection{From perturbing the surface to perturbing the flow}

Perturbing surfaces is less convenient than perturbing the flow on a fixed surface. We achieve this in the following way: Projecting along our normal bundle $\nu$, we can get a map $\pi_\Sigma$ that maps the total bundle of $\nu$ to $\Sigma$. Restricted to a nearby hypersurface $\pi_\Sigma|_{\Sigma'}$, this map is a diffeomorphism. Then, studying the flow of $X_{H'}$ on $\Sigma'$ is the same as studying the flow of $D\pi_\Sigma|_{\Sigma'}(X_{H'})$ on $\Sigma$.

This interacts particularly well with the contact description of the Hamiltonian flow, using a local neighbourhood $U$ and our local choice of constant section $P$. Then, $\pi_U$ is the radial projection with respect to the fiberwise center $P$ that identifies the neighbourhoods
$$\pi_U:U'\to U.$$
Further, since $\lambda_P(q,s(p-P)+P) = s\lambda_P(q,p)$, the kernels agree and $\pi_U$ is actually a contactomorphism. 

The characteristic flow on $U'$ is parallel to the Reeb flow of $\lambda_P|_{U'}$. Since $\lambda_P$ is dilation-equivariant and ${U'}=fU$, we have that the projection of the characteristic flow $D\pi_U X_H$ is parallel to the Reeb flow of $$(\pi_U^{-1})^*\lambda_P|_{U'} = f\lambda_P|_U= f\alpha.$$ 
Thus, we can say that locally around a point in $\Sigma$, the following three views on perturbations are the same
$$\{\mbox{Nearby hypersurfaces}\}\cong\{\mbox{Nearby contact forms on }U\}\cong C^\infty(U,I_\varepsilon),$$
and thus all three are describing perturbations of $H$ up to reparametrization of the characteristic flow. We topologize the set of perturbations as a function space with the $C^k$ topologies. For $k\geq 1$, these topologies coincide with the $C^k$ topology of $\Ham_s$ pulled back by the map $H_f$. 

We can finish by recalling that the Reeb flow of $f\alpha$ is the contact Hamiltonian flow of $h=\frac 1f$ with respect to $\alpha$, i.e., the vector field $R_h$ cf.~\cite{Geiges}(Chapter 2.3),
$$\alpha(R_h)=h,\quad \iota_{\pi_\xi R_h} d\alpha = dh(R)\alpha - dh.$$

\begin{proposition}
    For a radial relatively compact set $K$ with $K\cap\Sigma=U$, let a property $\Bcal_K\subseteq C^\infty(U,\RR\geq 0)$ be a property of contact Hamiltonians $h$. By $h\mapsto \frac 1h=f$ it induces a radial property $\Acal_K$ by Definition~\ref{def:radialProperty}.
\end{proposition}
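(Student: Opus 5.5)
The argument reduces, via the map $h\mapsto f=\frac 1h$, to the construction already packaged in Definition~\ref{def:radialProperty}. The plan is to define the property of functions $\Bcal'_K\subseteq C^\infty(U,I_\varepsilon)$ by
$$\Bcal'_K=\{f\mid \tfrac 1f\in\Bcal_K\},$$
and to apply Definition~\ref{def:radialProperty} to $\Bcal'_K$. The resulting $\Acal_K$ is, by that definition, a radial property in $K$, and hence local in $K$ by the lemma following it.

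The work is to check that this is well posed and geometrically meaningful. I would proceed in three steps.
\begin{enumerate}
\item The map $h\mapsto 1/h$ is a well-defined bijection between positive contact Hamiltonians and positive conformal factors. For the range $I_\varepsilon$ relevant to graph-like perturbations, one needs $h$ to take values in $(1/(1+\varepsilon),1/(1-\varepsilon))$, so I would intersect $\Bcal_K$ with this open set of contact Hamiltonians. This costs nothing, since the condition is $C^0$-open.
\item For $H\in\Ham_s(\Sigma,K)$, write $H|_K=G\,\overline{H_f}|_K$ as in Definition~\ref{def:radialProperty}. The factor $f$ is determined uniquely by the zero set $\Sigma_H\cap K$: it is the radial graph function with respect to $P$, and Lemma~\ref{lem:Hadamard} guarantees that the multiplier $G$ does not affect it. Hence membership of $H$ in $\Acal_K$ depends only on $\Sigma_H\cap K$, and in particular only on $H|_K$.
\item By the discussion in Subsection~\ref{subsec:ContactFormulation}, the radial projection $\pi_U$ is a contactomorphism carrying the characteristic flow of $\Sigma_H\cap K$ to a reparametrization of the Reeb flow of $f\alpha$. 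That Reeb flow is the contact Hamiltonian flow $R_h$ of $h=1/f$ with respect to $\alpha$. So the statement ``$h\in\Bcal_K$'' is exactly the condition imposed on the characteristic flow, and $\Acal_K$ is the intended Hamiltonian property.
\end{enumerate}

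I would also record the topological compatibility needed later for residuality. For $k\geq 1$, inversion $h\mapsto 1/h$ is a homeomorphism in the $C^k$ topology on functions bounded away from $0$. By Corollary~\ref{cor:HamiltoniansToSurface}, the $C^k$ topology on $f$ matches the $C^k$ topology on $\Ham_s$ near $\overline{H_K}$. Consequently, if $\Bcal_K$ is $C^k$-residual, then $\Acal_K$ is $C^k$-residual, and Lemma~\ref{lem:ReduceToCover} then applies.

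I expect the main subtle point to be the uniqueness of $f$ in step (2). The factorization $H|_K=G\,\overline{H_f}|_K$ must not depend on the auxiliary choices: the cutoff $\chi_K$, the extension $f^*$, and the reference Hamiltonian $H$. This is settled by observing that $\overline{H_f}|_K=fr-1$, whose zero set in $K$ is $\{r=1/f\}$, so $f$ is recovered from $\Sigma_H\cap K$ alone.
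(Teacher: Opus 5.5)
Your proposal is correct and matches the paper's intent. The paper gives no separate proof: the proposition is just the composition of $\Bcal_K$ with $h\mapsto 1/h$, fed into Definition~\ref{def:radialProperty}, which is exactly your $\Bcal'_K$, and your well-posedness and locality checks are fine.

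One caution concerns your step 3 and reflects an inconsistency already present in the paper. With $\overline{H_f}|_K=fr-1$, as in your last paragraph, $\Sigma_H\cap K=\{r=1/f\}$ pulls back to the contact form $\tfrac1f\alpha$, and its Reeb flow is generated by the contact Hamiltonian $f$, not $1/f$. Your steps 1 and 3 instead use the convention $U'=fU$ of Subsection~\ref{subsec:ContactFormulation}. This does not affect the formal statement, since any property of functions induces a radial property, but your claim that $\Acal_K$ is the intended property only holds once you fix one convention.
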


\begin{remark}
    We do not assume the hypersurface to be of contact type, and we do not exploit any stable Hamiltonian or framed Hamiltonian structure, as all these notions are global information of the hypersurface. We only note that we have in a local chart a local Liouville description of the flow, which is weaker than the above notions. 
\end{remark}


\section{Proof of the main theorem}\label{subsec:Bas}

We will find iterative conditions for accumulated intersection and break them: A zeroth order condition is that the curves intersect, a first order condition that they are tangent etc.


\subsection{The homopodal relations}

Ultimately, we are interested in intersections of flow lines of the characteristic flow in $\Sigma$ \emph{after} projection to the base manifold $Q$. Recall from Section~\ref{subsubsec:JetsOfSubmanifolds} that we denote by $\Sub_{\pi_Q}(\Sigma,1)$ the set of submanifolds compatible with the projection, so that $\pi_Q\colon \Sigma\to Q$ induces a map
$$\pi_Q:\Sub_{\pi_Q}(\Sigma,1)\to\Sub(Q,1).$$
In this 1-dimensional context compatible regular curves are those that project to regular curves in the base. This is the same as to demand that they are non-vertical $D\pi_Q\dot\gamma\neq 0$. Similarly, we find induced maps for the jet bundles 
$$\pi_Q^k:J^k_{\pi_Q}(\Sigma,1)\to J^k(Q,1).$$
\begin{lemma}\label{lem:submersiveHam}
    For $H\in\Ham_s$, $d\pi_QX_H(x)\neq 0$ for $x\in\Sigma$.
\end{lemma}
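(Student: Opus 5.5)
We argue by contradiction, using the symplectic structure to relate vertical tangent vectors of $\Sigma$ to $dH$. Suppose $x=(p,q)\in\Sigma$ satisfies $d\pi_Q X_H(x)=0$. In the local coordinates $(p,q)$ of Subsubsection~\ref{subsubsec:HamVec},
\[
d\pi_Q X_H=\sum_i dH(\partial_{p_i})\,\partial_{q_i}.
\]
So the hypothesis says exactly that $dH(\partial_{p_i})(x)=0$ for all $i$. In other words, $dH_x$ vanishes on the vertical subspace $V_x=\ker d\pi_Q(x)=T_x(T^*_qQ)$.

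From this we get $V_x\subseteq\ker dH_x=T_x\Sigma$, using that $0$ is a regular value. Hence $\ker\bigl(d(\pi_Q|_\Sigma)_x\bigr)=V_x\cap T_x\Sigma=V_x$, which has dimension $n$. But $\pi_Q|_\Sigma\colon\Sigma\to Q$ is a submersion from a manifold of dimension $2n-1$ onto one of dimension $n$, so its kernel at $x$ must have dimension $(2n-1)-n=n-1$. This is a contradiction.

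Equivalently, the submersion gives $d\pi_Q(T_x\Sigma)=T_qQ$, which forces $T_x\Sigma\not\supseteq V_x$. Therefore $dH_x|_{V_x}\neq0$, and so some coefficient $dH(\partial_{p_i})$ is nonzero.

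There is no real obstacle here. The only point needing care is the identification of $\ker d\pi_Q$ with the span of the $\partial_{p_i}$, together with the dimension count. A coordinate-free version works the same way: $V_x$ is Lagrangian and $\omega(X_H,v)=dH(v)$, so $X_H\in V_x=V_x^{\omega}$ holds exactly when $dH|_{V_x}=0$, and we finish as above.
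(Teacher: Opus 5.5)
Your proof is correct and follows essentially the same route as the paper. Both arguments reduce $d\pi_Q X_H=0$ to $dH$ vanishing on the vertical space $T_x(T^*_qQ)$, and both rule this out by combining regularity of $\Sigma$ with submersivity of $\pi_Q|_\Sigma$; the paper phrases the last step as $\operatorname{Span}(\ker dH, T_x(T^*_qQ))=T_xT^*Q$ forcing $dH=0$, and your dimension count is the same observation because $T_x\Sigma$ is a hyperplane.
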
 
\begin{proof}    
Since $\pi_Q:\Sigma\to Q$ is a submersion, there is a lift from $TM$ to $\ker dH\subseteq TT^*Q$. So $\operatorname{Span}(\ker dH, TT_q^*Q)=TT^*Q$. If $dH$ would also vanish on all vertical vectors $dH(TT^*_qQ)=0$, then we would conclude that $dH=0$. But then $\Sigma$ would not be a regular level set of $H$. 
\end{proof}
As a result, the sections $\sigma$ and $\sigma_k$ of the submanifold bundles are compatible with the induced maps $\pi_Q$:
\begin{align*}
    \sigma\colon &\Sigma\to \Sub_{\pi_Q}(\Sigma,1) \stackrel{\pi_Q}\to\Sub(Q,1),\\
    \sigma_k\colon &\Sigma \to J^k_{\pi_Q}(\Sigma,1)\stackrel{\pi_Q^k}\to J^k(Q,1).
\end{align*}

\begin{remark}
    The map $\pi_Q^1$ is simply the differential $D\pi_Q$ at a point in $\Sigma$. This expression makes sense at all of $T^*Q$, not only on $\Sigma$. It shall be noted that this coincides with the Legendre transform: the vertical differential $d_vH=d(H|_{T^*_qQ})$ is an element of $T^*T^*_qQ$. Since $T^*_qQ$ is a vector space, it is canonically identified with its tangent space and thus $T^*T^*_qQ=T^{**}_qQ$, which can be canonically identified with $T_qQ$. Thus, $d_vH\in T_qQ$. It coincides with $d_vH=d\pi_Q^1X_H$, which is easily visible from the Hamiltonian equations in classical form 
    \begin{align*}
         X_H = &  \quad\sum_i  dH(\partial_{p_i}) \;\partial_{q_i}\\
                &-\sum_i  dH(\partial_{q_i}) \;\partial_{p_i}.
        \end{align*} 
    One can also set $d_vH=\Lcal_H:T^*Q\to TQ$. If $H$ is fiberwise strictly convex and superlinear at infinity, then $\Lcal_H:T^*Q\to TQ$ is a diffeomorphism and $F:=H\circ (\Lcal_H)^{-1}$ is the classical Legendre transform of $H$ (and then also $H= F\circ (\Lcal_F)^{-1})$. 
\end{remark}


\subsubsection{Homopodal relations}
We are interested in pairs of points $(q_i,p_i)\in\Sigma$ such that their flow lines have projections to $Q$ that are tangent to $k$-th order
$$\pi_Q^k\circ \sigma^k(q,p)(q_1,p_1)=\pi_Q^k\circ \sigma^k(q,p)(q_2,p_2).$$
For $k=0$ this just indicates an intersection. For $k=1$, this indicates a tangency. If $\Sigma$ is fiberwise convex (as in the Finsler case), then for each point there are only two points whose flow lines are tangent: The point itself and its opposite, which is traditionally called antipode. This inspires us extend the terminology: homopodes are the tangencies, isopodes are the equally oriented tangencies and antipodes the differently oriented tangencies. 

\begin{definition}
    The \emph{homopodes} of order $k$ of $(q,p)$ are the points $(p',q')\in\Sigma$
    \begin{align*}
         \Hcal^k(p,q)&=\{(q',p')\mid \pi_Q^k\circ \sigma^k(q_1,p_1)=\pi_Q^k\circ \sigma^k(q_2,p_2)\}\subseteq\Sigma,\\
         \Hcal^k(\Sigma)&=\{(q,p,q',p')\mid (q',p')\in \Hcal^k(q,p)\}\subseteq \Sigma\times\Sigma.
    \end{align*} 
    For $k>0$, we can divide the homopodes further into the \emph{iso-, and antipodes} $\Hcal^k_\pm(p,q)$ and $\Hcal^k_\pm(\Sigma)$ by demanding that $D\pi_QX_H(p,q)=sD\pi_QX_H(p',q')$ for $s>0$ and $s<0$, respectively.
\end{definition}

This definition fits into the following diagram where $\Hcal^k$ is the preimage of the diagonal of $\Delta_{J^k(Q,1)}$ in $\Sigma\times\Sigma$.

\begin{equation}\label{eq:Diagram}    
  \begin{tikzcd}
    &\Sub_{\pi_Q}(\Sigma,1)\times\Sub_{\pi_Q}(\Sigma,1) \arrow{r}{\pi_Q\times\pi_Q}\arrow{d}{j^k\times j^k} &\Sub(Q,1)\times\Sub(Q,1)\arrow{d}{j^k\times j^k}  \\ 
    \Sigma\times\Sigma \arrow{ur}{\sigma\times\sigma} \arrow{r}{\sigma^k\times\sigma^k} & J^k_{\pi_Q}(\Sigma,1)\times J^k_{\pi_Q}(\Sigma,1) \arrow{r}{\pi^k_Q\times\pi^k_Q}& J^k(Q,1)\times J^k(Q,1)\\
     \Hcal^k(\Sigma) \ar[u,hook] \arrow{rr} & & \Delta_{J^k(Q,1)} \ar[u,hook]
  \end{tikzcd}
\end{equation}
The sets $\Hcal^k_\pm(\Sigma)$ are unions of connected components of $\Hcal^k(\Sigma)$, but they are not visible in the diagram since the right side column is oblivious to parametrization.

\subsubsection{Example of the heart}
Let us investigate an example in $Q=\RR^2$. We look at a specific $\Sigma$ and intend to see that $\Hcal^1(\Sigma)$ is mostly a manifold. 
\begin{example}\label{ex:heart}
    The Hamiltonian $H(p,q)$ in $T^*\RR^2$ we consider should only depend on $p$ and be such that every fiber of $\Sigma$ has a heart shape as in Figure~\ref{fig:heart}. Since the fiber $\Sigma_q$ is a circle, the product $\Sigma_q\times\Sigma_q$ is thus a torus. As depicted, the interesting points are the inflection points of the heart and their parallels. As visible in the example, the antipodal set is a manifold, in this case a circle that winds off-diagonal around the torus. It has horizontal and vertical tangents at points whose coordinates are combinations of inflection points and their parallels. For the isopodal set the situation is similar: It the union of the diagonal and a circle with horizontal and vertical tangents as for the antipodal set. However, the intersection points of diagonal and this circle are not manifold points.

    Because of the $q-$invariance, the homopodal set extends to the set $\Delta_{\Sigma}\times \Hcal^1_{q,q}$. 
\end{example}
    \begin{figure}
        \centering
        \includegraphics[width=0.75\linewidth]{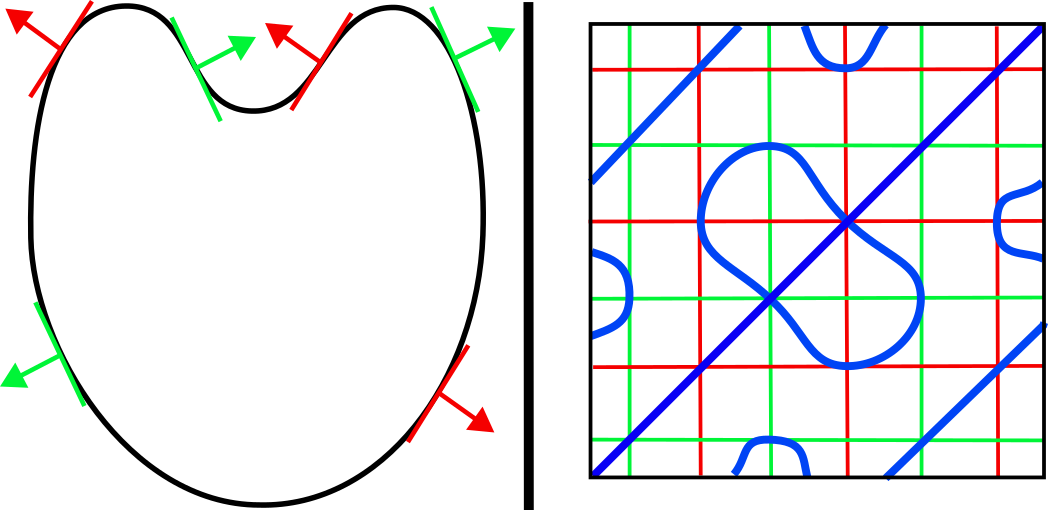}
        \caption{On the left hand side we see a fiber $\Sigma_q$ with six marked points: The inflection points and their parallels in green and red, respectively. On the right hand side we see the torus $\Sigma_q\times\Sigma_q$ with the coordinate lines of the red and green points. The blue lines are the fiber $\Hcal^1_{q,q}$. Note that it consists of two connected components: An off-diagonal circle representing the antipodals which curves at the inflection points times their parallels, and a contractible circle united with the diagonal representing the isopodals.}
        \label{fig:heart}
    \end{figure}

As Example~\ref{ex:heart} shows, the set $\Hcal^1(\Sigma)$ can have non-manifold points that cannot be perturbed away. However, these points are concentrated around the inflection points of the Hamiltonian. To describe the inflection points better, we consider the second vertical derivative $d_v^2H$ of $H$. In coordinates, this can be understood as the matrix $(\partial_{p_i}\partial_{p_j}H)_{ij}$. We are in particular interested how it behaves along $T\Sigma$. If $\ker d_v^2H\cap T\Sigma(p,q)$ is non-degenerate, then the fiber restriction $\pi^1_Q\circ\sigma^1|_{\Sigma_q}$, i.e., the map that assigns to a momentum its direction, is a submersion. However, if $\ker d_v^2H\cap T\Sigma(p,q)$ is at least 1-dimensional, then the fiber restriction fails to be a submersion at $(p,q,p,q)$. In Example~\ref{ex:heart}, these are the inflection points. 

\begin{definition}\label{def:inflectionPoints}
    The set of order $k$ inflection points is
    $$\Sigma^k=\{(q,p)\in \Sigma\mid \dim\ker d_v^2H\cap T\Sigma(p,q)\geq k\}.$$
\end{definition}

Note that this set is independent of the choice of regular $H$ generating $\Sigma$: the kernel of $d^2_vH\cap T\Sigma$ is the subspace where $\Sigma$ has a second order tangency to its tangent plane, which is independent of $H$. Away from the inflection points, because of non-degeneracy of $d^2_vH|_\Sigma$, there is a neighbourhood $U(p)\subseteq \Sigma$ on which 
$$\pi^1_Q\circ \sigma^1|_{U(p)}:U(p)\to J^1(Q,1)$$ 
is a local diffeomorphism. 

If for a homopodal pair $(p_1,q_1,p_2,q_2)$ one of the points is non-degenerate, say $(p_2,q_2)\notin\Sigma_1$, then the set $\Hcal(\Sigma)$ is a $(2n-1)$ dimensional manifold in a neighbourhood of $(p_1,q_1,p_2,q_2)$, parametrized by the graph $(p,q, (\pi^1_Q\circ \sigma|_{U(p_2)})^{-1}\circ\pi^1_Q\circ \sigma(p,q))$ with $(p,q)$ in a neighbourhood of $(p_1,q_1)$. 
There are of course pairs where both points are inflection points. In this simple example this only happens for homopodal points if the two inflection points are the same, which means it is a diagonal point (which we do not care about in this paper). In more complicated examples, such inflection-inflection pairs also appear off the diagonal. We will show that generically, these are also manifold points of $\Hcal(\Sigma)$.

\subsubsection{Jets of Hamiltonians} 
We now turn to the central ingredient of the proof: we need to perturb $H$ so that the pullback of $\Delta_{J^k(Q,1)}$ in Diagram~\ref{eq:Diagram} is a manifold. In order to do so, we make sure that the mechanism that constructs from the Hamiltonian $H$ the section of the manifold jets $\sigma^k$ factors through the jet of Hamiltonians $j^{k+1}H$. 

\[
  \begin{tikzcd}
    & \Gamma(J^{k+1}(\Sigma,\RR_{>0}))\arrow{dr} &  \\ 
    C^{k+1}(\Sigma,\RR_{>0}) \arrow{ur}{j^{k+1}}\arrow{rr}{\sigma^k} & & J^k_{\pi_Q}(\Sigma,1)
  \end{tikzcd}
\]

We do so in a local contact description so that we can use the contact Hamiltonian equations. Let $(U,\alpha)$ be a Domain in $\RR^n$ with contact form $\alpha$ (later this will be a chart in $\Sigma$). We first note that the contact Hamiltonian vector field as a map 
$$C^{k+1}(U,\RR_{>0})\to \Gamma^{k}(TU);\; h\mapsto R_h$$ 
decreases the regularity by 1. We also note that - since we assume $h>0$ - the resulting vector field is positively transverse to $\xi$. We denote by $J_{\xi}^k(U,1)$ the jets of submanifolds that are transverse to $\xi$. Note that in the cotangent bundle $TT^*_qQ\cap T\Sigma\subseteq \xi$ and therefore $J_{\xi}^k(U,1)\subseteq J_{\pi_Q}^k(U,1)$.  
\begin{definition}\label{def:jetcontacthamiltonian}
    The assignment of contact Hamiltonian vector field $h\mapsto R_h$ induces a map 
    $$\Rcal_k\colon J^{k+1}(U,\RR_{>0}) \longrightarrow J_\xi^k(U,1)\subseteq J_{\pi_Q}^k(U,1),\quad G\mapsto \sigma^k_h(h)(\pi_U(G)),$$
    where $G$ is a jet, $h$ is a positive function with $j^k(h)=G$ and $\pi_U(G)$ are the coordinates of $G$ in $U$.
\end{definition}
This map is well defined as a different choice $h'$ with the same jet $G$ results in a vector field $R_{h'}$ that coincides with $R_h$ to order $k$, and thus the $k$-jet of sub-manifolds coincide. 

\begin{theorem}\label{thm:R_G}
    The jet contact Hamiltonian vector field map $\Rcal_k\colon J^{k+1}(U,\RR_{>0}) \longrightarrow J_*^k(U,1)$ is a surjective map of bundles over $U$. Further, it is differentiable and submersive.
\end{theorem}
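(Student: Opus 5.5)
We work in local contact Darboux coordinates, so that $U\subseteq\RR^{2n-1}$ has coordinates $(z,x,y)$ with $\alpha=dz-\sum y_i\,dx_i$. The Reeb field is then $R=\partial_z$, and the contact Hamiltonian vector field has the explicit formula
$$R_h=h\,\partial_z+\sum_i\bigl(-\partial_{y_i}h\bigr)\partial_{x_i}+\sum_i\bigl(\partial_{x_i}h+y_i\partial_z h\bigr)\partial_{y_i}$$
(up to the sign conventions of~\cite{Geiges}). Since $R_h$ is built from $h$ and its first derivatives, the flow line through a point is governed by the ODE $\dot\gamma=R_h(\gamma)$. Its $k$-jet at $t=0$ is a universal polynomial in $j^{k}R_h(u)$, hence in $j^{k+1}h(u)$; this is the content of Definition~\ref{def:jetcontacthamiltonian}. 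Consequently $\Rcal_k$ is a smooth (polynomial in the fiber coordinates) map of bundles over $U$, and it preserves base points because the marked point of the flow line is $u$ itself. Differentiability is therefore immediate once this dependence is written down.

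For surjectivity and submersivity we work fiberwise over a fixed $u\in U$, and we take $u=0$ after translation. Since $h>0$, the $z$-component $h$ of $R_h$ is nonzero. We may therefore use divided coordinates with $z$ as the graph parameter, and describe the curve as $(x(z),y(z))$, with $k$-jet $(x^{(j)}(0),y^{(j)}(0))_{1\le j\le k}$. The goal is to show that the fiber derivative of $G\mapsto\Rcal_k(G)$ is onto $(\RR^{2n-2})^{k}$, by a triangular (inductive in $j$) argument.

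The slope of the curve is $x'=-\partial_y h/h$ and $y'=(\partial_x h+y\partial_z h)/h$. Differentiating $j-1$ times along the curve, the $j$-th derivatives $x^{(j)}(0)$ and $y^{(j)}(0)$ equal
$$-\frac{1}{h}\partial_z^{\,j-1}\partial_y h(0),\qquad \frac{1}{h}\partial_z^{\,j-1}\partial_x h(0),$$
plus terms involving only derivatives of $h$ of order $\le j$, together with the mixed derivatives of order $j$ that do not have the pure form $\partial_z^{j-1}\partial_{x}$ or $\partial_z^{j-1}\partial_{y}$. The reason is that each $\frac{d}{dz}$ along the curve equals $\partial_z+x'\partial_x+y'\partial_y$, and the top-order pure $\partial_z$ part occurs linearly with coefficient $1/h$.

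Hence, when we vary only the coefficients $\partial_z^{j-1}\partial_{x_i}h(0)$ and $\partial_z^{j-1}\partial_{y_i}h(0)$ for $j=1,\dots,k$, holding all other entries of the jet fixed, the differential of $\Rcal_k$ is block lower-triangular with invertible diagonal blocks $\pm h(0)^{-1}\mathrm{Id}$. This gives submersivity. Surjectivity follows by solving the triangular system successively for these coefficients: given any target jet, we first pick $h(0)>0$ arbitrary and then determine order by order the coefficients that produce the prescribed $x^{(j)}(0)$ and $y^{(j)}(0)$.

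The main obstacle is the bookkeeping of the triangular structure, namely verifying that no other order-$(j)$ term of $h$ contaminates the leading coefficient of $x^{(j)},y^{(j)}$ in a way that spoils invertibility. Even if it does, that contamination only involves entries we hold fixed, so it is harmless. Finally we must check independence of the choices. The Darboux chart is arbitrary, and jets of submanifolds transverse to $\xi$ are precisely those admitting $z$ as a graph parameter in some Darboux chart. Surjectivity and submersivity are invariant under the chart changes, so the local statement yields the global one over $U$.
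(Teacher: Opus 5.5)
Your proof is correct, but it takes a different route from the paper's.

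\textbf{The paper's route.} The paper argues geometrically. It picks a polynomial representative $\psi$ of the target jet in directed coordinates. It then prescribes the $1$-jet of an autonomous contact Hamiltonian along $\psi$: the conditions $h(\psi)=\alpha(\dot\psi)$ and $dh|_\xi=-\iota_{\dot\psi}d\alpha|_\xi$, taken from Geiges' isotropic isotopy extension, together with the choice $dh(\dot\psi)=\frac{d}{dt}\alpha(\dot\psi)$ dictated by autonomy. This $1$-jet is extended fiberwise affinely over an exponential tubular neighbourhood of $\psi$ in the $\xi$-directions, so that $\psi$ is literally a flow line of $R_h$. Submersivity then follows because this construction depends smoothly on the jet, which gives a local smooth right inverse.

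\textbf{Your route.} You compute directly in Darboux coordinates centred at the base point. You show that the $j$-th graph derivative $w^{(j)}(0)$ depends affinely on the pure coefficients $\partial_z^{j-1}\partial_{x_i}h(0)$ and $\partial_z^{j-1}\partial_{y_i}h(0)$, with invertible leading block $\pm h(0)^{-1}$. This yields a block lower-triangular Jacobian.

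\textbf{What each buys.} Your approach is more elementary and self-contained, since it needs no appeal to Geiges' proof. It gives explicit preimages depending smoothly on the target jet. It also shows the sharper fact that $\Rcal_k$ factors through $J^k$, because $w^{(j)}(0)$ only involves derivatives of $h$ of order $\le j$. The paper's route gives instead a geometric statement: every curve transverse to $\xi$ is, locally, an actual flow line of some autonomous contact Hamiltonian, not merely at the level of jets.

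\textbf{Details to tighten.}
\begin{enumerate}
\item For $\alpha=dz-\sum y_i\,dx_i$, the $\partial_z$-coefficient of $R_h$ is $h-\sum_i y_i\partial_{y_i}h$. This is forced by $\alpha(R_h)=h$, so it is not a matter of sign conventions. Since the base point has $y=0$, the Leibniz rule shows that this correction, and likewise the term $y_i\partial_z h$, contributes only derivatives of order $<j$ to $w^{(j)}(0)$. So your triangular structure survives.
\item Replace your fallback sentence about ``contamination'' with the actual reason it is harmless. The order-$j$ derivatives of $h$ enter $w^{(j)}(0)$ only through $\mp h(0)^{-1}(\partial_z+w'(0)\cdot\partial_w)^{j-1}\partial_{y_i}h$ and $h(0)^{-1}(\partial_z+w'(0)\cdot\partial_w)^{j-1}\partial_{x_i}h$. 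Every non-pure term there carries at least two $w$-derivatives, so it is a different jet coordinate from the ones you vary.
\item A plain translation in $y$ does not preserve $\alpha$. Either add the shear $z\mapsto z-y_0\cdot x$, or simply take a Darboux chart centred at $u$. Having $y(u)=0$ is exactly what gives $\xi_u=\ker dz$ and justifies using $z$ as the graph parameter.
\item The map is rational in $h(0)$ rather than polynomial. This does not affect smoothness.
\end{enumerate}
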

\begin{proof}
    It is clear that $\Rcal_k$ preserves the fibers of the bundles, and that $\Rcal_k$ is differentiable.\\    
    \bparagraph{Surjectivity} Let $\Psi_0\in  J_*^k(U,1)$ be a jet that is transverse to the contact structure. We will need to make choices about the following:
    \begin{itemize}
        \item choose a parametrized path $\psi_0(t)$ that represents $\Psi_0$,
        \item construct a contact Hamiltonian $h_0$ that has this parametrization as a flow line.
    \end{itemize}
    The jet $j^{k+1}h$ will then be our pre-image. Anticipating the submersive step, we make sure that the choices can be made in a uniform way for a neighbourhood of $\Psi_0$.
    
    Pick a representative path $\psi$ of $\Psi_0$. We first choose coordinates that are directed with respect to $\psi|_{\Op(0)}$. I.e., we choose $(x_1,\ldots,x_m)$ such that $x_1$ is monotone along any short enough representative of $\Psi_0$, which means $dx_1|_{\psi_0}\neq 0$. For any $\Psi$ near $\Psi_0$ and any representative $\psi$ of $\Psi$, this coordinate system is also directed for $\psi$, because $dx_1|_{\psi}\neq 0$ is an open condition (possibly we need to shrink $\Op(0)$). Reparametrizing $t$ by $x_1$ and noting that the marked time of $\Psi$ is $x_1(\Psi)$, we can write 
    \begin{itemize}
        \item $x_1(\psi(t))=t$,
        \item $j^k\psi(x_1(\Psi))=\Psi$.
    \end{itemize}
    A valid choice for representatives $\psi$ of $\Psi$ are polynomials in the coordinates $x_2,\ldots,x_m$ of degree $\leq k$. Then, the assignment $\Psi\mapsto\psi$ is uniquely determined and smooth in $\Psi$. 
    
    To choose a contact Hamiltonian $h$, we use machinery from contact geometry. We observe that a point is an isotropic submanifold of $\Sigma$, and that $\psi$ is thus an isotopy $\psi(t)$ of isotropic submanifolds. We employ the Isotropic Isotopy Extension Theorem~\cite[Theorem 2.6.2]{Geiges} to produce a (time dependent) Hamiltonian $h_t$ generating a contact isotopy $\varphi(t)$ (we choose the start time as $\varphi(x_1(\Psi))=id$) whose restriction to the point $\pi_{J^0_*(U,1)}\Psi$ is the isotropic isotopy $\varphi(t)|_{\pi_{J^0_*(U,1)}\Psi}=\psi(t)$. Geiges' proof first finds conditions on the 1-jet of $h$ along $\psi(t)$ and then extends linearly. The 1-jet conditions on $h$ are (note that the right hand sides are determined by $\psi$: 
    \begin{align*}
         h_t(\psi(t))&=\alpha(\dot\psi(t)) \\ &\mbox{This is forced by the first contact Hamiltonian equation.}\\
         dh_t(\psi(t))|_\xi &= -\iota_{\dot\psi(t)}d\alpha|_\xi, \\& \mbox{This is forced by the second contact Hamiltonian equation along $\xi$}.\\
         dh_t(\psi(t))[\dot\psi(t)]  &= \frac d{dt}\left(\alpha(\dot\psi(t))\right) \\ &\mbox{This is an arbitrary choice, in which we deviate from Geiges.}
    \end{align*}
    We note that since $\dot\psi(t)$ and $\xi$ build a frame, this completely determines the 1-jet of $h_t$ at time $t$ at the place $\psi(t)$. Geiges' proof involves showing that any Hamiltonian satisfying these equations indeed generates the desired contact isotopy and we refer the reader to his proof for that verification. 
    
    The third condition is actually not a necessary condition to generate the contact isotopy, it could be set to any other number to complete the 1-jet information (Geiges makes a different arbitrary choice $dh_t(\psi(t))[R_\alpha]=0$). We choose the number specifically as we do because we are (as opposed to Geiges in the Theorem 2.6.2) interested in choosing a contact Hamiltonian that is \emph{autonomous}, i.e., independent of time $h_t=h$. For this, it is crucial to recognize that by the Leibniz rule
    $$\frac d{dt} h_t(\psi(t)) = \dot h_t(\psi(t)) + dh_t(\dot\psi(t)).$$
    In order to have first order time independence, we require $\dot h_t(\psi(t))=0$, and by the first equation $\frac d{dt} h_t(\psi(t))=\frac d{dt}\left(\alpha(\dot\psi(t))\right)$ so the Leibniz rule forces the third equation. 
    
    We then extend the 1-jet of $h_t$ along $\psi(t)$ to an actual function in the following way: We choose an auxiliary Riemannian metric on $U$ and map the transverse bundle $\xi|_{\psi(t)}$ to $U$ by the exponential map 
    $$\exp\colon \xi|_{\psi(t)} \to U;\quad (v, t)\mapsto \exp(\psi(t),v).$$
    Restricted to a small enough neighbourhood of the zero section  $V_\xi(\psi(t))$, this map is a diffeomorphism onto its image $V\subseteq U$ (because $\psi$ is transverse to $\xi$). We define $h$ fiberwise affine linearly in the $\xi$-bundle in accordance with the first two equations
    $$\tilde h\colon V_\xi(\psi(t))\to \RR;\quad \tilde h(t, v) = \alpha(\dot\psi(t)) - \dot\psi(t) d\alpha(\dot\psi(t),v)$$
    and then we move that function by the exponential map
    $$h\colon V\to \RR;\quad h(x) = \tilde h (\exp^{-1} x).$$
    Then, $h$ satisfies the first two equations by construction, and third equation is a consequence of being autonomous.

    We thus have constructed our pre-image 
    $$\Rcal_k(j^{k+1}h(\pi_{J^0_\xi(U,1)}\Psi))=\Psi.$$
    
    We could extend $h$ beyond the neighbourhood $V$ by choosing a cutoff function and interpolating to constant 1, but for the purpose of taking the jet this is not necessary. 

    \bparagraph{Submersivity}
    Let $v$ be a vector in $TJ^k_*(U,1)$. We choose a differentiable path $\Psi_s$ such that $\frac{d}{ds}\Psi_s = v$. We are done if we find a differentiable path of contact Hamiltonians $h(s)$ such that $$\Rcal_k(j^{k+1}h_s(\pi_{J^0_\xi(U,1)}\Psi(s)))=\Psi(s).$$ 

    We first note that the associated polynomial representatives $\psi_s(t)$ of $\Psi(s)$ form a differential path of $C^k$ paths since the coefficients of the polynomials are differentiable, since they are coordinates of $\Psi(s)$.

    As a result, the $s$-dependent 1-jet prescription for $h_s$ is differentiable in $s$ and thus also the fiberwise linear extension to actual functions $h_s$ is differentiable in $s$, and thus so is $j^{k+1}h$. The evaluation at the base point $\pi_{J^k_\xi(U,1)}\Psi(s)$ is also differentiable since $\Psi(s)$ is a differentiable path. 
\end{proof}

We thus study the following composition of bundle maps
\[
  \begin{tikzcd}    
    J^{k+1}(\Sigma,\RR_{>0}) \arrow{r}{\Rcal_k} & J^k_\xi(\Sigma,1)\subseteq J_{\pi_Q}^k(U,1) \arrow{r}{\pi^k_Q}& J^k(Q,1).
  \end{tikzcd}
\]
A choice of contact Hamiltonian induces a holonomic section of these bundles which is compatible with the bundle maps.


\subsubsection{Transversality}

We apply the established submersivity to the set of homopodal points. 

\begin{theorem}[Transversality for homopodal points]\label{thm:homopodaltransversality}
    Let $Q$ be a manifold of dimension $d\geq 2$. Let $k\geq 1$. For a $C^{k+2}$-residual set of Hamiltonians $H\in \Ham_s(Q)$, the following is true:
    \begin{itemize}
        \item The set of antipodal points $\Hcal^-_k(\Sigma)\subseteq \Sigma\times\Sigma$ is a submanifold of dimension $(3-k)(n-1)+1$.
        \item The set of non-diagonal isopodal points $\Hcal^+_k\setminus \Delta_\Sigma\subseteq \Sigma\times\Sigma$ is a submanifold of dimension $(3-k)(n-1)+1$. 
    \end{itemize}
\end{theorem}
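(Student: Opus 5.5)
The plan is to realise $\Hcal^k_\pm(\Sigma)$, away from the diagonal, as the preimage of a submanifold under a $2$-fold multijet map. Transversality will then follow from the multijet transversality theorem of~\cite{G25}, with the submersivity input supplied by Theorem~\ref{thm:R_G}. The dimension is then a pure count.

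\textbf{Step 1 (the target submanifold).} First I show that $\pi_Q^k\colon J^k_{\pi_Q}(\Sigma,1)\to J^k(Q,1)$ is a submersion. Since $\pi_Q\colon\Sigma\to Q$ is a submersion, in local coordinates adapted to it ($\Sigma\cong Q\times F$ locally) a jet of a non-vertical curve in $\Sigma$ is a jet of a graph over the coordinate $x_1$ in both the $Q$- and the $F$-coordinates, and $\pi_Q^k$ simply forgets the $F$-components. Hence
$$W:=(\pi_Q^k\times\pi_Q^k)^{-1}(\Delta_{J^k(Q,1)})$$
is a submanifold of $J^k_{\pi_Q}(\Sigma,1)^2$. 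Its codimension equals $\dim J^k(Q,1)=1+(n-1)(k+1)$, with $n=\dim Q=d$. Composing with the submersion $\Rcal_k\times\Rcal_k$ of Theorem~\ref{thm:R_G}, the set
$$\widetilde W:=(\Rcal_k\times\Rcal_k)^{-1}(W)$$
is a submanifold of the same codimension in the space of pairs of $(k+1)$-jets of contact Hamiltonians. Restricting to pairs with distinct base points, $\widetilde W$ lies in the $2$-fold multijet space $J^{k+1}_2(U,\RR_{>0})$.

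\textbf{Step 2 (localisation and genericity).} By Lemma~\ref{lem:ReduceToCover} and Definition~\ref{def:radialProperty}, it suffices to prove genericity locally for the perturbation functions $f$ in radial charts. Because the statement concerns pairs of points, I work with products $K\times K'$ of radial sets from the countable cover; there are countably many such pairs, so intersecting the resulting residual sets is harmless. In a chart the characteristic foliation is the Reeb foliation of $f\alpha$, i.e.\ the contact Hamiltonian flow of $h=1/f$. Therefore $\sigma^k\times\sigma^k$ on $\Sigma\times\Sigma\setminus\Delta_\Sigma$ equals $(\Rcal_k\times\Rcal_k)\circ j^{k+1}_2h$, and
$$\Hcal^k(\Sigma)\setminus\Delta_\Sigma=(j^{k+1}_2h)^{-1}(\widetilde W).$$
The multijet transversality theorem~\cite{G25} gives, for a residual set of $h$ (equivalently of $f$), that $j_2^{k+1}h\pitchfork\widetilde W$ off the diagonal. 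This requires $C^{k+2}$ control because $(k+1)$-jets are involved, which matches the claimed $C^{k+2}$-residuality. Corollary~\ref{cor:HamiltoniansToSurface} then transports residuality from functions $f$ to Hamiltonians in $\Ham_s$.

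\textbf{Step 3 (dimension and the $\pm$ split).} Transversality makes the preimage a submanifold of codimension $1+(n-1)(k+1)$ in $\Sigma\times\Sigma$, which has dimension $2(2n-1)$. Its dimension is therefore
$$4n-2-1-(n-1)(k+1)=(3-k)(n-1)+1.$$
For $k\geq1$ the sign of $s$ in $D\pi_QX_H(p,q)=s\,D\pi_QX_H(p',q')$ is locally constant on $\Hcal^k$, because $D\pi_QX_H\neq0$ by Lemma~\ref{lem:submersiveHam}. So $\Hcal^k_+$ and $\Hcal^k_-$ are unions of components. Antipodes never lie on $\Delta_\Sigma$, since there $s=1$; hence $\Hcal^k_-$ is entirely off-diagonal and is a submanifold. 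The set $\Hcal^k_+\setminus\Delta_\Sigma$ is the remaining part.

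\textbf{Main obstacle.} I expect the hardest step to be Step 2's localisation: making the multijet transversality theorem apply to perturbations of $\Sigma$ itself rather than of a fixed space. This has two parts.
\begin{itemize}
\item Pairs of points lying in overlapping or distinct charts must be perturbable independently. For distinct points, first shrink to disjoint neighbourhoods, and use the cutoff construction of $\overline{H_f}$.
\item The map $f\mapsto\sigma^k$ must genuinely factor through $j^{k+1}(1/f)$ with the submersivity of Theorem~\ref{thm:R_G}, uniformly on the chart.
\end{itemize}
Non-compactness of $\Sigma\times\Sigma\setminus\Delta_\Sigma$ is handled by the countable cover together with the relative compactness of each $K$.
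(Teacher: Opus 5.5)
Your proposal is correct and follows the paper's proof essentially step by step. You factor the homopodal map through $j^{k+1}h\times j^{k+1}h$, pull back $\Delta_{J^k(Q,1)}$ by the submersion $(\pi^k_Q\circ\Rcal_k)\times(\pi^k_Q\circ\Rcal_k)$, apply the multijet transversality theorem with $r=k+1$ over pairs of chart neighbourhoods with disjoint closures, and count dimensions. Your coordinate check that $\pi^k_Q$ is a submersion, your ordering of the composition as $\pi^k_Q\circ\Rcal_k$ (the paper writes it in the reverse order), and your argument that the sign of $s$ is locally constant and that antipodes never lie on $\Delta_\Sigma$ supply details the paper only asserts.
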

\begin{proof}
As explained in Lemma~\ref{lem:ReduceToCover}, it is enough to prove the statement for a neighbourhood $\Ucal$ of any  $H\in\Hcal_s$, and any radial neighbourhood $K\subseteq T^*Q$ over a neighbourhood $U\subseteq \Sigma$. Since we look at a pair of jets, for two points $x_1\neq x_2$ in $\Sigma$, we consider two radial neighbourhoods $K_i\subseteq T^*Q$ over neighbourhoods $U_i\subseteq \Sigma$. 

We thus fix $H\in\Ham_s$ and consider only nearby Hamiltonians, which are in the radial neighbourhood $K$ over $U\subseteq \Sigma$ sufficiently modeled by contact Hamitlonians $h\in C^{k+1}(U,\RR_{\geq 0})$. The Homopodal sets $\Hcal_k\cap U_1\times U_2$, which we are continuing to denote by $\Hcal_k$, are then the preimage of the diagonal
$$\Hcal_k=(j^{k+1}h\circ\Rcal_k\circ\pi^k_Q\times j^{k+1}h\circ\Rcal_k\circ\pi^k_Q)^{-1}(\Delta_{J^k(Q,1)})$$
under the map 
$$j^{k+1}h\circ\Rcal_k\circ\pi^k_Q\times j^{k+1}h\circ\Rcal_k\circ\pi^k_Q: U_1\times U_2\subseteq \Sigma\times\Sigma\to J^k(Q,1)\times J^k(Q,1).$$

By design this map factors by first takin a section $$j^{k+1}h\times j^{k+1}h:U_1\times U_2 \to J^{k+1}(U_1,\RR_{>0})\times J^{k+1}(U_2,\RR_{>0})$$
and then continuing with the mapping $$(\Rcal_k\times\Rcal_k)\circ(\pi_Q^k\times\pi_Q^k):J^{k+1}(U_1,\RR_{>0})\times J^{k+1}(U_2,\RR_{>0})\to J^k(Q,1)\times J^k(Q,1).$$
As established above, the composition $\Rcal_k\circ \pi_Q^k$ is a submersion, therefore the same is true for the product map. We conclude that the partial preimage of the diagonal 
$$\DD:=\left((\Rcal_k\times\Rcal_k)\circ(\pi_Q^k\times\pi_Q^k)\right)^{-1}(\Delta_{J^k(Q,1)})$$
is a submanifold (whose dimension we discuss below). Note that $\DD$ is independent of $h$.

By construction, $\Hcal_k=(j^{k+1}h\times j^{k+1}h)^{-1}(\DD)$. But this is the same as intersecting $\DD$ with the graph of the section $j^{k+1}h\times j^{k+1}h$ and projecting back
$$\Hcal_k = \pi_{\Sigma\times\Sigma}((j^{k+1}h\times j^{k+1}h)(U_1\times U_2)\cap \DD).$$
If the intersection $(j^{k+1}h\times j^{k+1}h)(\Sigma\times\Sigma)\cap \DD$ is transverse, then it is a submanifold and then (since $j^{k+1}h\times j^{k+1}h$ is a section) the projection back to the base is a submanifold. Thus we have proved:
\begin{center}
    If for a contact Hamiltonian $h\in C^{k+1}(\Sigma,\RR_{>0})$ the graph of the jet $j^{k+1}h$ is transverse to $\DD$, then $\Hcal_k$ is a sub-manifold of $\Sigma\times\Sigma$. 
\end{center}
All that is left is the proof that for a residual set of $h$ the claimed transversality holds. The following is a modern reformulation of Mather's result on multijet transversality ~\cite{M70}.
\begin{theorem}[Theorem 10.2.1 of ~\cite{G25}]
    Let $E\to M$ be a bundle, let $W\subset (J^{r}E)^s$ be a submanifold in the multijet bundle, and let $K\subset M^{s}\setminus\Delta$ be a closed subset, where $\Delta$ denotes the large diagonal. Consider the set
    $$T_{W,K}:=\{f\in \Gamma^{\infty}E:(j^r)^sf\pitchfork W \mathrm{\:over\:}K\}.$$
    It holds that
    \begin{enumerate}
        \item $T_{W,K}$ is $C^{r+1}$-residual in both the weak and strong Whitney topologies, and
        \item if $W$ is closed and $K$ is compact, then $T_{W,K}$ is open in both the weak and strong Whitney topologies.
    \end{enumerate}
\end{theorem}

\begin{remark}
    The citation~\cite{G25} only concludes $C^{\infty}$-residual, but without changing the proof it also proves $C^{r+1}$-residual.
\end{remark}

For $M=\Sigma$, $E$ the trivial line bundle, $s=2$,  $r=k+1$, $K=\overline{U_1\times U_2}$ and $W=\DD$, this is precisely what we want. It shows transversality for the $k+1$-jet of $h$, and thus holds for a $C^{k+2}$-residual $h$. 

We conclude the proof in the case of transversal intersection by dimension counting: $J^k(Q,1)$ consists of $k$-jets of $\RR^{n-1}$-valued functions and thus is a $1+(k+1)(n-1) = kn+n-k$-dimensional space. Thus, the diagonal $\Delta_{J^k(Q,1)}\subseteq J^k(Q,1)\times J^k(Q,1)$ has codimension $kn+n-k$. Since $\DD$ is the preimage by a submersion, also $\DD\subseteq J^k_*(\Sigma,1)\times J^k_*(\Sigma,1)$ has codimension $kn+n-k$. The dimension of $\Sigma$ is $2n-1$ and thus the graph of $j^{k+1}h\times j^{k+1}h$ has dimension $2(2n-1)$. Finally, since the intersection is transversal, we can compute the dimension of the intersection 
\begin{align*}
    \dim \Hcal_k &=\dim (j^{k+1}h\times j^{k+1}h)(\Sigma\times\Sigma)\cap \DD)\\
    &= \dim (j^{k+1}h\times j^{k+1}h)(\Sigma\times\Sigma) - \codim \DD = 2(2n-1) - (kn+n-k) \\
    &= 3n - 2 - kn + k = (3-k)(n-1)+1,
\end{align*}
which is the claimed dimension. The splitting into $\pm$ flavour works since $k\geq 1$. 
\end{proof}

\begin{corollary}\label{cor:dimensioncount}
    If $n=2$ and $k\geq 5$, or if $n\geq 3$ and $l\geq 4$, then there is a $C^{k+2}$-residual subset of $\Ham_s$ such that the set of $k$-th order homopodes is empty except for the diagonal $\Hcal_k\setminus\Delta_{\Sigma}=\emptyset$.
\end{corollary}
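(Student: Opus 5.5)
The plan is to read Corollary~\ref{cor:dimensioncount} directly off Theorem~\ref{thm:homopodaltransversality}, using that a submanifold of negative dimension is empty.

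Fix $k\geq 1$ and let $\Acal_k\subseteq\Ham_s$ be the $C^{k+2}$-residual set supplied by Theorem~\ref{thm:homopodaltransversality}. For $H\in\Acal_k$, the antipodal set $\Hcal^-_k(\Sigma)$ and the off-diagonal isopodal set $\Hcal^+_k(\Sigma)\setminus\Delta_\Sigma$ are submanifolds of $\Sigma\times\Sigma$. Each has dimension $(3-k)(n-1)+1$. Since $k\geq 1$, every off-diagonal homopodal pair is either an isopode or an antipode. Indeed, equality of the $1$-jets of the projected curves forces $D\pi_QX_H$ at the two points to be parallel, and both vectors are nonzero by Lemma~\ref{lem:submersiveHam}. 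Hence
\[
\Hcal_k(\Sigma)\setminus\Delta_\Sigma\subseteq \Hcal^-_k(\Sigma)\cup\bigl(\Hcal^+_k(\Sigma)\setminus\Delta_\Sigma\bigr).
\]
It therefore suffices to show that $(3-k)(n-1)+1<0$ under the stated hypotheses. A transversally cut out submanifold of negative expected dimension is empty: in the proof of Theorem~\ref{thm:homopodaltransversality}, transversality of $j^{k+1}h\times j^{k+1}h$ to $\DD$ at a single intersection point would already force $\dim(\text{graph})\geq\codim\DD$.

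The arithmetic is routine. For $n=2$ the dimension is $(3-k)+1=4-k$, which is negative exactly when $k\geq 5$. For $n\geq 3$ and $k\geq 4$ we get $(3-k)(n-1)+1\leq -(n-1)+1=2-n<0$. Here the "$l\geq 4$" in the statement should be read as $k\geq 4$.

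One step needs care: the residual set in Theorem~\ref{thm:homopodaltransversality} is produced locally on products $U_1\times U_2$ of radial charts with disjoint closures, which only covers points away from the diagonal. To conclude emptiness of all of $\Hcal_k\setminus\Delta_\Sigma$, I would cover $\Sigma\times\Sigma\setminus\Delta_\Sigma$ by countably many such products, chosen via the dense sequence and the shrinking radial covers $K_{i,j,n}$ as in Lemma~\ref{lem:ReduceToCover}. On each product the property "no $k$-homopodes in $U_1\times U_2$" is local and, by the multijet transversality theorem, $C^{k+2}$-residual. The countable intersection of these properties is then $C^{k+2}$-residual by Lemma~\ref{lem:ReduceToCover}. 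I expect this bookkeeping to be the only delicate point: one has to ensure that every off-diagonal pair of any $H$ lies in some product chart that is radial for $H$, and the last paragraph of Subsubsection~\ref{subsubsec:Covers} supplies exactly this. The rest is the dimension count above.
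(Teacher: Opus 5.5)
Your proposal is correct and follows the paper's proof. The paper also invokes Theorem~\ref{thm:homopodaltransversality} and observes that, under the stated hypotheses, the transversally cut out set has dimension $(3-k)(n-1)+1<0$, so it is empty off the diagonal. Your extra remarks make explicit what the paper leaves implicit: that for $k\geq 1$ every off-diagonal homopode is an iso- or antipode, that $l$ should be read as $k$, and that $\Sigma\times\Sigma\setminus\Delta_\Sigma$ can be covered by countably many products of charts.
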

\begin{proof}
    We merely need to estimate the dimension $(3-k)(n-1)+1$. If the dimension is negative, then $\Hcal_k$ is empty except for the diagonal. By Theorem~\ref{thm:homopodaltransversality} transversality can be arranged for a $C^{k+2}-$residual subset of $\Ham_s$.
    
    If $n=2$, then 
    $$(3-k)(n-1)+1=4-k,$$
    which is negative for $k\geq 5$. 

    If $n\geq 3$ and $k\geq 4$, then 
    $$(3-k)(n-1)+1 \leq -1,$$
    which is negative. 
\end{proof}


\end{document}